\setlist[enumerate]{noitemsep, topsep=0.5\topsep}
\setlist[description]{noitemsep, topsep=0.5\topsep}
\setlist[itemize]{noitemsep, topsep=0.5\topsep}
\theoremstyle{plain}
\newtheorem{theorem}{Theorem}[section]
\newtheorem{proposition}[theorem]{Proposition}
\newtheorem{lemma}[theorem]{Lemma}
\newtheorem{corollary}[theorem]{Corollary}
\theoremstyle{definition}
\newtheorem{definition}[theorem]{Definition}
\newcommand{\cA}{\ensuremath{{\mathcal A}}\xspace}
\newcommand{\cO}{\ensuremath{{\mathcal O}}\xspace}
\newcommand{\cQ}{\ensuremath{{\mathcal Q}}\xspace}
\newcommand{\cY}{\ensuremath{{\mathcal Y}}\xspace}
\newcommand{\bbR}{\ensuremath{\mathbb{R}}}
\newcommand{\bbQ}{\ensuremath{\mathbb{Q}}}
\newcommand{\bbZ}{\ensuremath{\mathbb{Z}}}
\newcommand{\bbN}{\ensuremath{\mathbb{N}}}
\newcommand{\cp}{\textbf{P}\xspace}
\newcommand{\cnp}{\textbf{NP}\xspace}
\newcommand{\cconp}{\textbf{co-NP}\xspace}
\newcommand{\totalp}{\textbf{TotalP}\xspace}
\newcommand{\incp}{\textbf{IncP}\xspace}
\newcommand{\delayp}{\textbf{DelayP}\xspace}
\newcommand{\delayps}{\textbf{PSDelayP}\xspace}
\newcommand{\RR}{\mathbb{R}}
\newcommand{\QQ}{\mathbb{Q}}
\newcommand{\NN}{\mathbb{N}}
\newcommand{\rb}{\right\}\xspace}
\newcommand{\lb}{\left\{\xspace}
\newcommand{\lbr}{\left(\xspace}
\newcommand{\rbr}{\right)\xspace}
\newcommand{\cosolutions}{\mathcal{S}\xspace}
\newcommand{\encsize}[1]{\langle #1 \rangle\xspace}
\newcommand{\finished}[1]{{#1}^{\textsc{Fin}}}
\newcommand{\decision}[1]{{#1}^{\textsc{Dec}}}
\DeclareMathOperator{\conv}{conv}
\DeclareMathOperator{\poly}{poly}
\DeclareMathOperator{\lex}{lex}
\DeclareMathOperator{\lexmin}{lexmin}
\newcommand{\YN}{\cY_{\text{N}}\xspace}
\newcommand{\YX}{\cY_{\text{X}}\xspace}
\newcommand{\Y}{\cY\xspace}
\newcommand{\ws}[2]{{#1}^{\textsc{WS}}(#2)}
\newcommand{\lws}[2]{{#1}^{\textsc{Lex-WS}}(#2)}
\newcommand{\enc}[1]{\ensuremath{\langle #1 \rangle}}
\newcommand{\trans}{^T}
\newcommand{\new}[1]{\emph{#1}}
\begin{document}

\title{\normalfont \LARGE Output-sensitive Complexity of Multiobjective Combinatorial Optimization}

\author[1]{Fritz Bökler\footnote{Correspondence to: Department of Computer Science, TU Dortmund, Otto-Hahn-Str. 14, 44225 Dortmund, Germany. E-mail: fritz.boekler@tu-dortmund.de. The author has been supported by the Bundesministerium für Wirtschaft und Energie (BMWi) within the research project ``Bewertung und Planung von Stromnetzen'' (promotional reference 03ET7505) and by DFG GRK 1855 (DOTS).}}
\author[2]{Matthias Ehrgott}
\author[1]{Christopher Morris\footnote{The author is funded by the German Science Foundation (DFG) within the Collaborative Research Center SFB 876 “Providing Information by Resource-Constrained Data Analysis”, project A6 “Resource-efficient Graph Mining”.}}
\author[1]{Petra Mutzel}
\affil[1]{Department of Computer Science\\TU Dortmund University, Dortmund, Germany\\

 \texttt{\{fritz.boekler, christopher.morris, petra.mutzel\}@tu-dortmund.de}}
\affil[2]{Department of Management Science, Lancester University, Lancester, United Kingdom\\

 \texttt{m.ehrgott@lancaster.ac.uk}}

\date{\vspace{-40pt}}

\maketitle

\begin{abstract}
We study output-sensitive algorithms and complexity for multiobjective combinatorial optimization problems.
In this computational complexity framework, an algorithm for a general enumeration problem is regarded efficient if it is output-sensitive, i.e., its running time is bounded by a polynomial in the input and the output size.
We provide both practical examples of MOCO problems for which such an efficient algorithm exists as well as problems for which no efficient algorithm exists under mild complexity theoretic assumptions.

\paragraph{Keywords} Multiobjective Optimization, Combinatorial Optimization, Output-sensitive Complexity, Linear Programming\end{abstract}

\section{Introduction}

Computational complexity theory in multiobjective optimization has been considered in different shapes for quite a while.
In this paper, we argue that in contrast to the traditional way of investigating the complexity of multiobjective optimization problems, especially multiobjective combinatorial optimization (MOCO) problems, output-sensitive complexity theory yields deeper insights into the complexity of these problems.

We stress here, that our discussions and results apply not only to MOCO problems, but also to every multiobjective optimization problem with a well-defined and finite set to output.
For example, they also apply to multiobjective integer optimization problems with finite Pareto-fronts.

The problem on which we will exemplify our considerations is the following.
\begin{definition}[Multiobjective Combinatorial Optimization Problem] \label{def:moco}
	An \new{instance of a multiobjective combinatorial optimization problem} is a pair $(\cosolutions, C)$, where
	\begin{itemize}
		\item $\cosolutions \subseteq \{0,1\}^n$ is the set of \emph{solutions}, and
		\item $C \in \QQ^{d \times n}$ is the \emph{objective function matrix}.
	\end{itemize}
	The \new{multiobjective combinatorial optimization problem} consists of all its instances.
	The goal is to enumerate for a given instance $(\cosolutions, C)$ all minimal elements of $\Y = \{ Cx \mid x \in \cosolutions\}$ with respect to the canonical partial order on vectors.
\end{definition}
The canonical partial order on vectors is the componentwise less-or-equal partial order, denoted as $\leq$.
We call the minima of $\Y$ the \emph{Pareto-front}, which we denote by $\YN$.
Points in $\YN$ are called \new{nondominated} points.
The preimage of the Pareto-front is called \emph{Pareto-set}; solutions in the Pareto-set are called \new{Pareto-optimal} or \new{efficient} solutions.
Note that the MOCO problem defined as above is different from the problem of enumerating the Pareto-set.

In the definition of the MOCO problem, we only allow rational numbers in the input as real numbers cannot be encoded in our model of computation.
But of course, if we show that these problems are already hard on rational numbers, they do not become easier when extending the set of numbers allowed.

When investigating the complexity of these problems, it is important to define the input size precisely.
As $\cosolutions$ can be very large, it is supposed to be implicitly given.
This accounts for the fact that searching $\cosolutions$ exhaustively is explicitly not what we want to do.
Thus, the input size is supposed to be $n$ plus the encoding length of the matrix $C$, denoted as $\langle C \rangle$, which will be defined in the preliminaries in Section~\ref{sec:preliminaries}.

In practice we also want to find a solution $x\in\cosolutions$ which maps to a point $y\in \YN$.
Regarding the formal definition above, this renders the problem to not be well-defined.
Thus, when describing an algorithm, we usually expect that also a solution is produced for each point of the Pareto-front.
When we prove a hardness result, Definition~\ref{def:moco} gives us problems which are not harder than a variant of the problem where we also want to find solutions.
Hence, proving hardness of the problem as defined in Definition~\ref{def:moco} will lead to a hardness result for the problem including finding of a representative solution.

\subsection{Traditional Complexity Results for MOCO}
In the past, complexity of MOCO problems has been investigated in the sense of $\cnp$-hardness.
The canonical decision problem of a MOCO problem is as follows.
\begin{definition}[Canonical Decision Problem] \label{def:can_dec_problem}
Given a MOCO problem $P$, the \emph{canonical decision problem} $\decision{P}$ is defined as follows:
We are given an instance $(\cosolutions, C)$ of $P$ and a vector $k\in \QQ^d$ and the goal is to decide whether there exists an $x\in\cosolutions$, such that $Cx\leq k$.
The input size is supposed to be $n + \encsize{C} + \encsize{k}$.
\end{definition}
Obviously, any MOCO problem that is $\cnp$-hard for $d=1$ is also $\cnp$-hard for $d\geq 2$.
Moreover, known results in the literature show that even for MOCO problems with $d=2$ objectives $\decision{P}$ is $\cnp$-hard, even for problems that are in $\cp$ when $d=1$ such as the biobjective shortest path problem~\citep{serafini}, biobjective minimum spanning tree problem~\citep{cametal}, biobjective assignment problem~\citep{serafini}, and biobjective uniform matroid problem~\citep{preholzhau}.
The proof of Proposition~\ref{thm:trad_hard_enum_easy} also shows that $\decision{P}$ for a MOCO problem with two objectives and no constraints is $\cnp$-complete.

One remark is in order here.
It is not clear if the $\cnp$-hardness of $\decision{P}$ implies the $\cnp$-hardness of the MOCO problem $P$.
In fact, \citet{serafini} proved the polynomial-time equivalence of $P$ and $\decision{P}$ only for the case where $||Cx - Cx'||_\infty < \delta n^l$ for every two solutions $x, x' \in \cosolutions$ and some $\delta,l \geq 1$.

Furthermore, we note that $\decision{P}$ for $d=2$ is the same as the decision problem of the so-called \new{resource-constrained combinatorial optimization problem}
\begin{align*}
(\cosolutions, c, r, \hat r),
\end{align*}
where $\cosolutions$ is as in Definition~\ref{def:moco}, $c \in \QQ^n$, $r \in \QQ^n$, $\hat r \in \QQ$, and the goal is to find a minimizer of $\{c^T x \mid x \in \cosolutions,\, r^Tx \le  \hat r\}$.
This observation reveals that the canonical decision problem for a MOCO problem with $d=2$ and that for a related resource-constrained combinatorial optimization problem are identical, despite the fundamental difference in the goals of these two problems.
We can therefore question, whether studying the $\cnp$-hardness of $\decision{P}$ can provide much insight in the hardness of MOCO problems on top of insights into the hardness of resource-constrained single objective combinatorial optimization problems.

Another important consideration is the size of the Pareto-front.
Many researchers have provided instances of MOCO problems for which the size of $\YN$ is exponential, we refer to~\citep{H79} for the biobjective shortest path problem,~\citep{hamruh} for the biobjective minimum spanning tree problem,~\citep{ruhe88} for the biobjective integer minimum-cost flow problem, and~\citep{Ehr2005} for the biobjective unconstrained combinatorial optimization problem.
This consideration also raises the question, if the $\cnp$-hardness of a MOCO problem $P$ which has a Pareto-front of exponential size, does add information to the picture.
Because $\cnp$-hardness is still only an indication that $P$ cannot be solved in polynomial time, but $P$ having a Pareto-front of exponential size is an actual proof of this fact. Despite these negative results, a few results on MOCO problems with a Pareto-front of polynomial size or which belong to $\cp$ are known.
We refer to~\citep{FFHKPRSSW16} in this issue for more details.

This brief overview on complexity results for MOCO problems reveals what one could consider a bleak picture:
Multiobjective versions of even the simplest combinatorial optimization problems have Pareto-fronts of exponential size, and thus there is no prospect for finding polynomial time algorithms to determine the Pareto-front.
A closer look at some of the pathological biobjective instances does, however, reveal, that despite the exponential size of the Pareto-front, all elements of the Pareto-front lie on a single line in $\QQ^2$.
Hence, we might hypothesize that traditional worst case complexity analysis, which measures complexity in relation to input size, is not the right tool to investigate the complexity of MOCO problems.
Instead, we propose to investigate output-sensitive complexity.

\subsection{The Pareto-front Size within the Smoothed Analysis Framework}
Before we move forward to output-sensitive complexity, we need to adress one practical objection regarding MOCO problems.
It is often argued that computing the entire Pareto-front of a MOCO problem is too costly to pursue, because it can have exponential size in the worst-case.
But in practice, it was observed that usually the situation is not so bad when the number of objectives is small.
This observed discrepancy between practice and the traditional worst case analysis motivates stochastic running time analysis, where the inputs are drawn from a certain distribution.
One prominent stochastic running time analysis framework, so called \emph{Smoothed Analysis}, can be used to bound the expected worst-case size of the Pareto-front of a MOCO problem.

In classical running time analysis, we play against an adversary who gives us ill posed instances in the sense that the running time is very high or the Pareto-front is very large.
Smoothed Analysis aims at weakening this adversary.
In the context of multiobjective optimization, a model by~\citet{ANRV05} was used to show the best expected bound on the smoothed worst-case size of the Pareto-front of general MOCO problems.

Instead of all the objective function matrix entries, the adversary is only allowed to choose the first row of it deterministically and for all remaining rows $i\in\{2,\dots, d\}$ and columns $j\in [n]$ it may choose a probability density function $f_{i,j}: [0,1] \rightarrow \RR$, describing how potential entries are drawn.
The adversary is not allowed to choose these densities arbitrarily, because otherwise it could again choose the coefficients deterministically.
Rather, the $f_{i,j}$ are bounded by a model parameter $\phi$.

Consequently, the adversary gives us a set of solutions $\cosolutions \subseteq \{0,1\}^n$, the first row of the objective function matrix $c\in\QQ^n$ and the probability densities $f_{i,j}$.
\citet{BR12} showed that the expected size of the Pareto-front of these instances is at most $\mathcal{O}(n^{2d}\phi^d)$.

Thus, when we fix the number of objectives, we can expect to have only polynomial Pareto-front sizes of any MOCO problem in practice.
This emphasizes the need for output-sensitive algorithms, because when the Pareto-front is small, we want our algorithms to be fast and when the Pareto-front is large, we want them to be not too slow.

\subsection{Organization}

In the remainder of the paper, we will first give some definitions in Section~\ref{sec:preliminaries}.
In particular, we will give necessary definitions from theoretical computer science, including output-sensitive complexity.

Afterwards, we will be concerned with a sufficient condition for a MOCO problem to be efficiently solvable in the sense of output-sensitive complexity in Section~\ref{sec:suff_cond}.
This sufficient condition can be applied to the multiobjective global minimum-cut problem which will be our first example of an efficiently solvable MOCO problem.

We will show that the multiobjective shortest path problem is an example of a problem which is not efficiently solvable under weak complexity theoretic assumptions in Section~\ref{sec:mosp}.
Moreover, we will demonstrate a general method from output-sensitive complexity for showing that an enumeration problem is not efficiently solvable under the assumption that $\cp \neq \cnp$.

In Section~\ref{sec:supported}, we will again be concerned with efficiently solvable multiobjective optimization problems.
We will discuss some connections between MOCO problems and multiobjective linear programming.
We will show that the impression that computing extreme points of the Pareto-front is easier than computing the whole Pareto-front is indeed true in the world of output-sensitive complexity, at least for the multiobjective shortest path problem.
Moreover, we review some recent results from the literature about output-sensitive algorithms for computing supported solutions and extreme nondominated points and also provide some new results for the special case of $d=2$.

\section{Preliminaries} \label{sec:preliminaries}

In the following section we fix notation. Let $v$ be a vector in $\bbR^n$, then we denote its $i$-th \new{component} by $v_i$. We assume that all vectors are \new{column vectors}. The $i$-th unit vector is denoted by $e_i$. Let $M$ be a matrix in $\bbR^{m \times n}$, then we denote its $i$-th \new{row} by $M_i$ and the \new{scalar} in the $i$-th row and the $j$-th column by $m_{ij}$. The \new{transpose}\index{Transpose} of $M$ is denoted by $M\trans$\index{$M\trans$}. For $r$ in $\bbR$, the \new{ceiling function} is $\lceil r \rceil := \min \lb z \in \bbZ \mid z \geq r \rb$ and let $[n] := \lb 0, \dots, n \rb \subset \bbN$ and $[n\!:\! m] := [n,m] \cap \bbN$.

Let $(S, \preceq)$ be a partially ordered set.
Then $s <_{\lex} t$ for $s$ and $t$ in $S^p$, $p\in\NN$, if there is $j$ in $[1\!:\!p]$ such that
\begin{enumerate}
	\item $s_i = t_i$ for $i < j$, and
	\item $s_j < t_j\,.$
\end{enumerate}

\subsection{Theoretical Computer Science}
In theoretical computer science, it is important to define the model of computation used for running time analyses.
In the complexity of enumeration problems, the model of computation usually is the \emph{Random Access Machine} (RAM).
It can be concieved as a formal model of a computer, with a fixed and small set of instructions and an infinite number of memory cells.
These memory cells are only allowed to hold integer numbers with up to $\mathcal{O}(n^l)$ bits on inputs of size $L$ for some constant $l \geq 1$ to account for the precision needed in optimization.
For more details, we refer the reader to the book by \citet[23~sqq.]{Cor+2001}.

Moreover, we also need to discuss encoding lengths of numbers.
The following is largely based on~\citet[29~sqq.]{Gro+1988}. For $z \in \bbZ \setminus \{ 0 \}$, the \new{encoding length} of $z$, i.e., an upper bound for the minimum number of bits to encode $z$ in binary representation, is 
\begin{equation*}\label{e:en}
\enc{z} := 1+ \lceil \log_2 (|z| + 1) \rceil\,.\index{$\enc{z}$}
\end{equation*}
That is, one bit to represent the sign and $\lceil \log_2 (|z| + 1) \rceil$ bits to encode the binary representation of the absolute value of $z$.
For $z=0$ one bit is sufficient, i.e., $\enc{0} := 1$.
Let $q \in \bbQ$ and let $q = a/b$ for $a$ and $b \in \bbZ$ such that $a$ and $b$ are \emph{coprime}, and $b > 0$, then the encoding length of $q$ is
\begin{equation*}\label{e:eq}
\enc{q} := \enc{a} + \enc{b}\,.
\end{equation*}
Let $v$ be a vector in $\bbQ^n$, then $\enc{v} := n +\sum_{i=1}^n \enc{v_i}$.
Moreover, let $M$ be a matrix in $\bbQ^{m \times n}$, then $\enc{M} := mn + \sum_{i=1}^m \enc{M_i}$.

Due to the special importance of polynomial running time, we define the set of functions not growing faster than a polynomial function as follows:
Let $f\colon \bbR^d \to \bbR$ be a function, then $f$ is in $\poly(n^1, \dots, n^d)$\index{$\poly(n^1, \dots, n^d)$} if there is a polynomial function $p\colon \bbR^d \to \bbR$ such that $f$ in $\cO(p(n^1, \dots, n^d))$. For a more concise introduction to formal languages and complexity theory, e.g., \cnp and \cconp, we refer to~\citep{Aro+2009}.


\subsection{Introduction to Output-sensitive Complexity}

Classically, the running time of an algorithm is measured with respect to the input size. An algorithm which has a running time which is polynomially-bounded in the input size is widely regarded as efficient. For certain problems, such as MOCO problems, the size of the output varies widely or even becomes exponentially large in the input size. 

Here, the notion of output-sensitive algorithms comes into play, where running time is not only measured in the input size but also in the size of the output, cf.~\citep{Joh+1988} for examples of output-sensitive algorithms. The point here is that we can view the MOCO problem of Definition~\ref{def:moco} as an \emph{enumeration problem}, i.e., enumerating the elements of the (finite) Pareto-front, see Section~\ref{mocoenum}. This task is solved by an \emph{enumeration algorithm}, i.e., an algorithm that outputs every element of the the Pareto-front exactly once.

In the following, we give a \emph{formal} definition of the terms enumeration problem and enumeration algorithm.
Moreover, we define complexity classes for enumeration problems. This section is largely based on~\citep{Schm2009}.

\begin{definition}[{cf.~\citep[7~sq.]{Schm2009}}]\label{d:ep}
	An \new{enumeration problem}\index{Enumeration Problem} is a pair $\lbr I, C \rbr$, such that 
	\begin{enumerate}
		\item $I \subseteq \Sigma^*$ is a language for some fixed alphabet $\Sigma$,  
		\item $C \colon I \to \Sigma^*$ maps each \new{instance}\index{Instance}  $x$ in $I$ to its \new{configurations} $C(x)$, and
		\item the encoding length $\enc{s}$ for $s$ in $C(x)$ for $x$ in $I$ is in $\poly(x)$.
	\end{enumerate}
	We assume that $I$ is decidable in polynomial time and that $C$ is computable.
\end{definition}

If the reader is not familiar with the term language, the set $\Sigma^*$ can be interpreted as the set of all finite strings over $\{0,1\}$.
This is important, because all instances and all configurations need to be encoded by finite strings, while the exact encoding is not relevant here.
The third requirement means, that the configurations need to be compact, i.e., the encoding length of a configuration of an instance $x$ should be polynomially bounded in the size of $x$.
The reason for this is to limit the number of configurations to be at most exponentially many.

\begin{definition}[cf.~{\citep[8]{Schm2009}}]\label{d:ea} 
	Let $E = (I, C)$ be an enumeration problem. An \new{enumeration algorithm} for $E$ is a RAM that
	\begin{enumerate}
		\item on input $x$ in $I$ outputs $c$ in $C(x)$ \emph{exactly once}, and
		\item on every input terminates after a \emph{finite} number of steps.
	\end{enumerate}
	Let $x$ in $I$ and let $|C(x)|= k$, then the $0$-th \new{delay} is the time before the first solution is output, the $i$-th delay\index{Delay} for $i$ in $[1\!:\!k-1]$ is the time between the output of the $i$-th and the $(i+1)$-th solution, and the $k$-th delay is the time between the last output and the termination of the algorithm.\end{definition}

The following complexity classes can be used to classify enumeration problems, cf.~\citep{Joh+1988}.

\begin{definition}[{cf.~\citep[12]{Schm2009}}]\label{d:pdt}  
	Let $E = (I, C)$ be an enumeration problem. Then $E$ is in 
	\begin{enumerate}
		\item $\totalp$\index{$\totalp$} (Output-Polynomial Time/Polynomial Total Time)\footnote{Although we use the term \emph{output-polynomial time} in the remaining text, we abbreviate it to $\totalp$ for historical and notational reasons.},
		\item $\incp$\index{$\incp$}  (Incremental Polynomial Time)\index{Incremental Polynomial Time},
		\item $\delayp$\index{$\delayp$}  (Polynomial Time Delay)\index{Polynomial Time Delay},
		\item $\delayps$\index{$\delayps$}  (Polynomial Time Delay with Polynomial Space)\index{Polynomial Time Delay with Polynomial Space}
	\end{enumerate}
	if there is an enumeration algorithm such that
	\begin{enumerate}
		\item  its running time is in $\poly(|x|, |C(x)|)$ for $x$ in $I$,
		\item  its $i$-th delay for $i$ in $[n]$ is in $\poly(|x|, |C^i(x)|)$ for $x$ in $I$,
		\item its $i$-th delay for $i$ in $[n]$ is in $\poly(|x|,|C^i(x)|)$ for $x$ in $I$, and
		\item the same as in 3.~holds and the algorithm requires at most polynomial space in the input size,
	\end{enumerate}
	respectively, where $C^i(x)$ denotes the set of solution that have been output before the $i$-th solution has been output.
\end{definition}

We say that an algorithm is \emph{output-sensitive} if it fulfills condition 1 of the above definition. Moreover, we say that an algorithm has \new{incremental delay} (\new{polynomial delay}) if it fulfills the second (third) condition of the above definition. The following hierarchy result holds.

\begin{theorem}[{\cite[14~sqq.]{Schm2009}}]
\begin{equation*}
\delayps \subseteq \delayp \subseteq \incp \subset \totalp\,.
\end{equation*}
\end{theorem}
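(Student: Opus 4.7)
The plan is to handle the three inclusions separately: the two non-strict ones follow directly from the definitions, while the strict inclusion requires a diagonalisation-style separation based on the time hierarchy theorem.

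First, I would observe that $\delayps \subseteq \delayp$ is immediate, since the only difference between the two classes is the additional polynomial space requirement on the same algorithm. For $\delayp \subseteq \incp$, I would note that any function bounded by a polynomial in $|x|$ is trivially bounded by a polynomial in $(|x|, |C^i(x)|)$, so a polynomial delay bound implies an incremental polynomial delay bound. For $\incp \subseteq \totalp$, the total running time is the sum of at most $|C(x)| + 1$ delays, each bounded by $p(|x|, |C^i(x)|) \le p(|x|, |C(x)|)$ for some polynomial $p$; hence the total is at most $(|C(x)| + 1)\, p(|x|, |C(x)|)$, which lies in $\poly(|x|, |C(x)|)$.

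For the strict inclusion $\incp \subsetneq \totalp$ I would invoke the classical time hierarchy theorem to obtain a language $L \subseteq \{0,1\}^*$ that is decidable in time $O(2^n)$ but not in polynomial time, and construct an artificial enumeration problem $E = (I, C)$ with $I = \{0,1\}^*$ and configurations
\begin{equation*}
C(x) \;=\; \{b(x)\} \times \{0,1\}^{|x|},
\end{equation*}
where $b(x) = 1$ if $x \in L$ and $b(x) = 0$ otherwise. Each configuration has encoding length $|x| + 1$, and $C$ is computable via the exponential-time decider for $L$, so the requirements of Definition~\ref{d:ep} are satisfied. The problem is in $\totalp$: decide $L(x)$ in time $O(2^{|x|})$ and then emit all $2^{|x|} = |C(x)|$ configurations in order, giving total time polynomial in $(|x|, |C(x)|)$. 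Conversely, any $\incp$ algorithm would have $0$-th delay bounded by $\poly(|x|, |C^0(x)|) = \poly(|x|)$; but the first output already contains $b(x)$ in its leading coordinate, so such an algorithm would decide $L$ in polynomial time, contradicting $L \notin \cp$.

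The main obstacle will be the design of this separating problem: it must be arranged so that the very first output already encodes the answer to a hard decision problem, while simultaneously the configuration set is large enough to give a $\totalp$ algorithm an exponential budget in $|x|$ in which to recover that answer. One should also verify, as above, that the encoding length of configurations remains polynomial as required by Definition~\ref{d:ep}, and emphasise that the problem obtained is artificial — separating $\incp$ from $\totalp$ by means of a natural combinatorial enumeration problem is a considerably more delicate matter.
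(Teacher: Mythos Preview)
The paper does not supply its own proof of this hierarchy; the theorem is simply quoted from \citet{Schm2009}. There is therefore nothing in the paper to compare your argument against directly.

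That said, your proposal is correct and is essentially the standard argument one finds in the cited source. The three non-strict inclusions are indeed definitional, and your separation $\incp \subsetneq \totalp$ is the expected one: pad the configuration set of an $\mathsf{EXP}\setminus\cp$ language so that $|C(x)|$ is exponential in $|x|$, giving a $\totalp$ algorithm an exponential time budget in which to decide membership, while any $\incp$ algorithm would have to reveal the membership bit within its $0$-th delay of $\poly(|x|)$. Your checks that the configurations have polynomial encoding length and that $C$ is computable are exactly the points Definition~\ref{d:ep} requires.

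One remark: as printed, conditions~2 and~3 of Definition~\ref{d:pdt} are identical, which is a typo in the paper; you have correctly read the $\delayp$ condition as bounding each delay by $\poly(|x|)$ alone, which is what the cited reference intends and what makes your $\delayp \subseteq \incp$ argument work.
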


An enumeration algorithm will be regarded \new{efficient} if it is output-sensitive.

\subsection{MOCO Problems are Enumeration Problems}\label{mocoenum}

We will now explain, how MOCO problems as defined in Definition~\ref{def:moco} relate to enumeration problems as defined in Definition~\ref{d:ep}.
More formally, we will show that the general MOCO problem is an enumeration problem.
Given a MOCO instance $(\cosolutions, C)$ we need to define the set $I$ of instances of the enumeration problem and the configuration mapping.

To define $I$, we fix a binary representation of $(\cosolutions, C)$ with encoding length of at most $\mathcal{O}(n+\langle C\rangle)$.
Then $I$ is the set of all such encodings of instances of our MOCO problem.
The configuration mapping now maps an instance to its Pareto-front $\YN$, by fixing an arbitrary encoding of rational numbers with at most $\mathcal{O}(\langle r \rangle)$ bits for a rational number $r$.
We observe that the value of each component $j$ of each point of the Pareto-front is at most $\sum_{i=1}^n |C_{ji}|\in \mathcal{O}(\langle C \rangle)$ and thus the encoding of each point in the Pareto-front is polynomial in the instance encoding size.
Thus, the configurations of a MOCO problem are the points of the Pareto-front and not, for example, the solutions of the MOCO problem.

Solving the so defined enumeration problem will give us the set of configurations, i.e., the Pareto-front in the given encoding.
This corresponds exactly to our definition of solving a MOCO problem.

\section{A Sufficient Condition for Output-sensitivity from the MOCO Literature} \label{sec:suff_cond}

In this section, we will show a sufficient condition for a MOCO problem being solvable in output-polynomial time, which follows directly from the multiobjective optimization literature.
A broad subject of investigation in the multiobjective optimization literature is the $\varepsilon$-constraint scalarization:
\begin{align*}
	\min \{ \mathbf{1}^T Cx \mid x \in \cosolutions, C_1^T x \leq \varepsilon_1, \dots, C_d^T x \leq \varepsilon_d\},\, \text{for } \varepsilon\in\QQ^d\,.
\end{align*}
It is well known that the complete Pareto-front can be found using this scalarization, cf.~\citep{CH83}, but the scalarization itself can be hard to solve, cf.~\citep{E06}.
In the past, authors have proven several bounds on the number of $\varepsilon$-constraint scalarizations needed to find the Pareto-front of a general MOCO problem.
It is possible to construct a sufficient condition for enumerating the Pareto-front of a MOCO problem in output-polynomial time from these results.

One of the first such bounds is due to~\citet{LTZ06}, who prove that at most $\mathcal{O}(|\YN|^{d-1})$ single-objective $\varepsilon$-constraint scalarization problems need to be solved.
The currently best known bound is due to~\citet{KLV15}, who prove a bound of $\mathcal{O}(|\YN|^{\lfloor \frac{d}{2} \rfloor})$.

From these results we can formulate the following corollary.
\begin{corollary}\label{thm:suff}
	If the $\varepsilon$-constraint scalarization of a given MOCO problem $P$ can be solved in polynomial time for a fixed number of objectives, $\YN$ can be enumerated in output-polynomial time for a fixed number of objectives.
\end{corollary}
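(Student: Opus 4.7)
The plan is to treat the result of~\citet{KLV15} as an oracle reduction and plug in the assumed polynomial-time scalarization. Their theorem is not merely an existential count of scalarizations; it provides an explicit algorithm that, given access to an $\varepsilon$-constraint scalarization subroutine, computes $\YN$ using $\mathcal{O}(|\YN|^{\lfloor d/2 \rfloor})$ oracle calls. So the task is to check that wiring in a polynomial-time scalarization really does produce an output-polynomial algorithm, i.e., one whose total running time lies in $\poly(n + \langle C \rangle, |\YN|)$ for fixed $d$.

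First I would verify that the $\varepsilon$-vectors issued to the oracle have encoding length polynomial in $n + \langle C \rangle$. In the KLV15 procedure the components of $\varepsilon$ are obtained from components of previously discovered Pareto points (up to trivial integer shifts). As already observed when MOCO was cast as an enumeration problem in Section~\ref{mocoenum}, every coordinate of every point in $\YN$ is bounded in absolute value by $\sum_{i} |C_{ji}|$, and hence has encoding length in $\poly(\langle C \rangle)$. Consequently every scalarization call is issued on an input of size polynomial in $n + \langle C \rangle$, and by hypothesis is answered in time polynomial in $n + \langle C \rangle$ for fixed $d$.

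Next I would argue that the bookkeeping between oracle calls — selecting the next $\varepsilon$, comparing a newly returned point against the list of points already output, and discarding duplicates — is elementary arithmetic and comparison on numbers whose encoding length is polynomial in $n + \langle C \rangle$, and so runs in time polynomial in $n + \langle C \rangle + |\YN|$ per call. Combining the two bounds, the overall running time is $\mathcal{O}(|\YN|^{\lfloor d/2 \rfloor})$ oracle calls at $\poly(n + \langle C \rangle)$ cost each, plus a corresponding polynomial bookkeeping cost. For fixed $d$ this is in $\poly(n + \langle C \rangle, |\YN|)$, which is precisely the defining condition for $\totalp$ in Definition~\ref{d:pdt}, yielding the corollary. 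The only delicate point is the encoding-length check for the $\varepsilon$-vectors produced along the way; once that is in place, the remaining argument is a direct bookkeeping computation.
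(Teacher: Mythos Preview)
Your proposal is correct and follows exactly the route the paper intends: the corollary is stated immediately after the bounds of \citet{LTZ06} and \citet{KLV15} and is not given a separate proof in the paper at all---it is treated as a direct consequence of those bounds. Your argument simply makes explicit the two verifications (encoding length of the $\varepsilon$-vectors and polynomial bookkeeping) that the paper leaves to the reader, so there is nothing to compare beyond noting that you have filled in details the authors chose to omit.
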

One example for such a problem is the multiobjective global minimum-cut problem.
\citet{AZ04} showed, that the $\varepsilon$-constraint version of the problem can be solved in time $\mathcal{O}(mn^{2d}\log n)$.
Thus, the multiobjective global minimum-cut problem is an example of a multiobjective optimization problem that can be solved in output-polynomial time for each fixed number of objectives.

Now, the question arises if this condition is also necessary, i.e., can we show that a MOCO problem cannot be solved in output-polynomial time by showing that the $\varepsilon$-constraint scalarization cannot be solved in polynomial time.
But as the following proposition shows, the condition of Corollary~\ref{thm:suff} is not necessary, unless $\cp = \cnp$.
\begin{proposition}\label{thm:trad_hard_enum_easy}
	There is a MOCO problem $P\in \incp$ with $\decision{P}$ being $\cnp$-complete.
\end{proposition}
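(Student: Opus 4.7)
My plan is to exhibit a specific biobjective $0/1$ MOCO problem $P$ whose two cost rows are negatives of each other. An instance is specified by a vector $a \in \bbZsp^n$; implicitly $\cosolutions := \{0,1\}^n$ and $C$ has rows $a^T$ and $-a^T$, so the instance is encoded by $a$ alone.

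For the $\cnp$-completeness of $\decision{P}$, membership in $\cnp$ is immediate because any $x \in \{0,1\}^n$ is a polynomially sized witness for which $Cx$ can be computed in polynomial time. For $\cnp$-hardness I would reduce from \textsc{Subset Sum}: given a Subset Sum instance $(a,t)$ with $a \in \bbZsp^n$ and $t \in \bbZsp$, map it to the $\decision{P}$-instance with the same $a$ and threshold $k := (t,-t)$. The resulting query asks whether some $x \in \{0,1\}^n$ satisfies $a^T x \leq t$ and $-a^T x \leq -t$, equivalently $a^T x = t$, so the yes-instances of the two problems coincide.

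For $P \in \incp$ I first observe that because $c_2 = -c_1$, every point of $\Y$ lies on the line $y_1 + y_2 = 0$; hence any two distinct points are mutually non-dominating, and $\YN$ is in bijection via $v \mapsto (v,-v)$ with the set $V := \{a^T x : x \in \{0,1\}^n\}$ of attainable subset sums. It therefore suffices to enumerate $V$ incrementally. The algorithm I have in mind is the subset-sum dynamic programme, adapted to emit values on the fly: maintain a hash table $S$, initialised by emitting $(0,0)$ and setting $S := \{0\}$; for $i = 1, \dots, n$, take a frozen snapshot of $S$ at the start of iteration $i$, and for each snapshot element $s$ test in $\poly(\langle a \rangle)$ time whether $s + a_i \in S$. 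If $s + a_i \notin S$, insert it and emit $(s + a_i,\, -(s+a_i))$. Every element of $V$ is discovered in the first iteration whose index realises it, so no sum is emitted twice, and $S = V$ upon termination.

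The main obstacle is the delay analysis. During any delay no element is inserted into $S$, so $|S|$ stays equal to the number of outputs already produced, and every snapshot crossed by the delay has size at most $|S|$. Because a delay crosses at most $n$ iterations and each unsuccessful membership test costs $\poly(\langle a \rangle)$ time, each delay is bounded by $n \cdot |S| \cdot \poly(\langle a \rangle)$, which is polynomial in the input size and in the number of outputs emitted so far. This yields $P \in \incp$.
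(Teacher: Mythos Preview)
Your proposal is correct and follows essentially the same approach as the paper: the paper also takes the biobjective problem $\min\{(c^Tx,-c^Tx)\mid x\in\{0,1\}^n\}$ with positive integer $c$, observes that all images lie on the line $y_1+y_2=0$ so the Pareto-front is exactly the set of attainable subset sums, and enumerates these by the incremental dynamic programme $F_{i+1}=F_i\cup(F_i+\{c_{i+1}\})$. The only cosmetic differences are that the paper reduces from \textsc{Partition} (by choosing the threshold $k=\tfrac{1}{2}\mathbf{1}^Tc\,(1,-1)^T$) rather than \textsc{Subset Sum}, and its $\incp$ argument is terser, simply noting $F_0\subsetneq F_1\subsetneq\dots\subsetneq F_n$ so each step of cost $\mathcal{O}(|F_i|\log|F_i|)$ yields at least one new output; your snapshot-and-hash-table formulation and the explicit $n\cdot|S|\cdot\poly(\langle a\rangle)$ delay bound make the same point with more detail.
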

\begin{proof}
	We consider the following biobjective problem
	$$(P)\ \min \{(c^Tx, -c^Tx) \mid x \in \{0,1\}^n \}$$
	with $c\in \NN^n$.
	We will first show $\cnp$-hardness and membership in $\cnp$ of $\decision{P}$ and then show membership in $\incp$ of $P$.
	
	Recall that  $\decision{P}$ is defined as the decision problem in which we are given an instance $(\cosolutions, C)$ of $P$ and a vector $k\in \QQ^d$ and the goal is to decide whether there exists an $x\in\cosolutions$, such that $Cx\leq k$.
	Setting
	\begin{align*}
	k := \frac{1}{2} \mathbf{1}^T c \begin{pmatrix}1\\-1\end{pmatrix},
	\end{align*}
	we can reformulate $\decision{P}$ in the following way: 
	Decide for a given vector $c\in\NN^n$ if there is an $x\in\{0,1\}^n$, such that $c^Tx = \frac{1}{2}\sum_{i=1}^n c_i$.
	We observe that this decision problem is the well-known partition problem shown by~\citet{Gar+1990} to be $\cnp$-hard. It follows from Definition~\ref{def:can_dec_problem} that $\decision{P}$ is in $\cnp$ and thus $\decision{P}$ is $\cnp$-complete.
	
	Enumerating the Pareto-front of $P$ can be done in incremental polynomial time by employing a recursive enumeration scheme on the first $i$ variables (fixing an arbitrary variable ordering).
	Setting all variables to $0$ yields point $(0,0)^T$.
	Now we fix all variables after the $i$-th to $0$ and allow the first $i$ variables to vary.
	Let $F_i$ be the Pareto-front for this restricted problem.
	If we know $F_i$, we can compute $F_{i+1}$ by computing $F_i \cup (F_i+ \{c_{i+1}\})$.
	Each such step yields time $\mathcal{O}(|F_i|\log |F_i|)$ and we have $F_0 \subsetneq F_1 \subsetneq \dots \subsetneq F_n$, i.e., we find at least one new point in each iteration.
\end{proof}

\section{Example of a Hard Problem: Multiobjective Shortest Path} \label{sec:mosp}

In this section, we will classify a problem as not efficiently solvable under mild complexity theoretic assumptions in the sense of output-sensitive complexity, namely the multiobjective shortest path problem, or more precisely, the multiobjective $s$-$t$-path problem.
\begin{definition}[Multiobjective $s$-$t$-path problem (MOSP)]
Given a directed graph $G=(V,E)$ where $E\subseteq V \times V$, two nodes $s,t\in V$, and a multiobjective arc-cost function $c:E \rightarrow \QQ^d_\geq$.
A feasible solution is a path $P$ from node $s$ to node $t$.
We set $c(P) := \sum_{e\in P} c(e)$.
The set of all $s$-$t$-paths is $\mathcal{P}_{s,t}$.
The goal is to enumerate the minima of $c(\mathcal{P}_{s,t})$ with respect to the canonical partial order on vectors.
\end{definition}
It has long been investigated in the multiobjective optimization community.
One of the first systematical studies was by~\citet{H79}.
In practice, MOSP can be solved relatively well.
But usually algorithms which solve MOSP solve a more general problem which we call the \emph{multiobjective single-source shortest-path (MO-SSSP) problem}.
In the MO-SSSP problem, instead of enumerating one Pareto-front of the Pareto-optimal $s$-$t$-paths, we enumerate for every vertex $v\in V \backslash \{s\}$ the Pareto-front of all Pareto-optimal $s$-$v$-paths.
One example of such an algorithm is the well known label setting algorithm by \citet{M84}, which solves the MO-SSSP problem in incremental polynomial time.

We stress here that MOSP and MO-SSSP are indeed different problems.
The insight that the MO-SSSP problem is solvable in incremental polynomial time does not immediately transfer to the MOSP problem.
This can be seen, when using Martins' algorithm for solving MOSP on a given instance, where we cannot even guarantee output-polynomial running time.

More modern label setting and label correcting algorithms usually start from there and try to avoid enumerating too many unneccessary $s$-$v$-paths.
The following considerations show that a large number of these $s$-$v$-paths are needed in these algorithms and that---in contrast to the MO-SSSP problem---there is no ouput-sensitive algorithm solving MOSP in general, if we assume $\cp \neq \cnp$.

Moreover, we will exemplify a general methodology in output-sensitive complexity for showing that there is no output-sensitive algorithm unless $\cp = \cnp$.
Similar to single objective optimization, we show that an enumeration problem is hard by showing the hardness of a decision problem, which is defined as follows.
\begin{definition}[Finished Decision Problem]
Given an enumeration problem $E = (I, C)$ the \new{finished decision problem} $\finished{E}$ is the following problem:
We are given an instance $x\in I$ of the enumeration problem and a subset $M \subseteq C(x)$ and the goal is to decide if $M = C(x)$.
\end{definition}

That is, when we are given an instance $x\in I$ of the enumeration problem $(I,C)$ and a subset $M\subseteq C(x)$ of the configuration set, we ask the question:
Do we already have all configurations?
As commonly done in theoretical computer science, we will identify a decision problem $P$ with its set of ``Yes''-instances.
So for an instance $x$ of $P$, we write $x\in P$ if $x$ is a ``Yes''-instance and $x\notin P$ if $x$ is a ``No''-instance of $P$.

\citet{LLK80} showed that if there is an output-polynomial algorithm for an enumeration problem, then we can solve the associated finished decision problem in polynomial time.
Or equivalently: If the finished decision problem cannot be solved in polynomial time, then we cannot solve the enumeration problem in output-polynomial time.

\begin{lemma}[\cite{LLK80}]\label{lemma:fin}
	If $E \in \totalp$ then $\finished{E} \in \cp$.
\end{lemma}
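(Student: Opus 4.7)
My plan is to use the standard "run and watch" simulation trick due to Lawler, Lenstra, and Rinnooy Kan. Suppose $E = (I,C) \in \totalp$, witnessed by an enumeration algorithm $A$ whose total running time on input $x \in I$ is at most $p(|x|, |C(x)|)$ for some polynomial $p$, which we may assume is monotone non-decreasing in its second argument. Given an instance $(x, M)$ of $\finished{E}$ with $M \subseteq C(x)$, I would simulate $A$ on $x$ for up to $T := p(|x|, |M|+1)$ steps, maintaining (for each output produced by $A$) a membership check against $M$.

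The decision rule would then be as follows. (i) If during the simulation $A$ outputs some configuration $c \notin M$, halt and answer ``No'': such a $c$ lies in $C(x) \setminus M$, so $M \neq C(x)$. (ii) If $A$ terminates within $T$ steps and every configuration it produced lies in $M$, answer ``Yes'': by the definition of an enumeration algorithm, the set of outputs is exactly $C(x)$, so $C(x) \subseteq M$, and combined with $M \subseteq C(x)$ this gives $M = C(x)$. (iii) If $A$ has neither terminated nor produced any output outside $M$ after exactly $T$ steps, answer ``No''.

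The correctness of case (iii) is the only nontrivial point. If $|C(x)| \leq |M|$, then by monotonicity the total running time of $A$ would be at most $p(|x|, |C(x)|) \leq p(|x|, |M|) \leq T$, so $A$ would have terminated within the allotted time. Since it did not, we must have $|C(x)| \geq |M|+1 > |M|$, and because $M \subseteq C(x)$ this forces $M \subsetneq C(x)$, justifying the ``No'' answer.

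For the complexity bound, the whole procedure performs $O(T)$ steps of simulation plus at most $O(T)$ membership queries against $M$; each such query can be resolved in time polynomial in $|x|+|M|$ (for instance by building a balanced search tree or hash table over $M$ in a preprocessing step). Since $T = p(|x|, |M|+1)$ and $|M|$ is bounded by the encoding length of the $\finished{E}$-instance $(x,M)$, the total runtime is polynomial in the input size of $\finished{E}$, placing $\finished{E}$ in $\cp$. I expect the only subtle step to be formalising case (iii), i.e.\ turning the failure-to-terminate signal into a clean lower bound on $|C(x)|$, which is exactly where the monotonicity of $p$ and the choice $T = p(|x|, |M|+1)$ (rather than $p(|x|, |M|)$) are used.
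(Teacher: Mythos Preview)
Your proof is correct and follows essentially the same ``run and watch'' simulation argument as the paper: simulate the output-polynomial enumerator for a time budget depending on $|x|$ and $|M|$, infer $|C(x)|>|M|$ from non-termination, and otherwise compare the produced output against $M$. The only cosmetic differences are that you exit early as soon as some $c\notin M$ appears (the paper waits until termination and then checks equality of $M$ with the full output) and that you take $T=p(|x|,|M|+1)$ where the paper uses $p(|x|,|M|)$; neither change affects the argument.
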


\begin{proof}
	Let $E=(I, C)$ be given.
	Since $E\in \totalp$, there exists a polynomial function $p$ and a RAM $A$ which enumerates $C(x)$ for a given $x\in I$ in time at most $p(|x|, |C(x)|)$.
	
	We will now construct an algorithm which will decide for a given instance $(x, M)$ of $\finished{E}$ whether it is a ``Yes''- or ``No''-instance.
	We simulate $A$ on $x$ for time $p(|x|, |M|)$.
	If $A$ does not halt on $x$, then $|M| < |C(x)|$ and we can safely answer ``No''.
	If $A$ does halt on $x$, we are almost done.
	It can still be the case that the input $M$ was invalid, i.e., $M \backslash C(x) \neq \emptyset$ or that $M\subsetneq C(x)$ and the algorithm was by chance faster than expected.
	Both conditions can be tested by checking whether $M$ is equal to the output of $A$.
	If it is, we return ``Yes'', otherwise ``No''.
	
	Simulation takes time $\poly(|x|, |M|)$.
	The output of $A$ has at most size $\poly(|x|, |M|)$ and the final check can thus be done in $\poly(|x|, |M|)$.
\end{proof}

We can now use this method to finally show that the MOSP problem is indeed a hard enumeration problem.
For a definition of $\cconp$, we refer the reader to the book by \citet{Aro+2009}.

\begin{theorem}\label{thm:mosp_hard}
	There is no output-sensitive algorithm for the MO $s$-$t$-path problem unless $\cp = \cnp$, even if the input graph is outerplanar.
\end{theorem}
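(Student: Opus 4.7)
The plan is to show that the finished decision problem $\finished{\text{MOSP}}$ is $\cnp$-hard already when restricted to biobjective instances on outerplanar graphs; combined with Lemma~\ref{lemma:fin}, this yields the theorem by contraposition.

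I would reduce from PARTITION. Given $a_1,\ldots,a_n \in \bbN$ with $\sum_{i=1}^n a_i = 2T$, the core gadget is the ``diamond chain'' outerplanar graph on $s = v_0, v_1, \ldots, v_n = t$, in which between each $v_{i-1}$ and $v_i$ two internally disjoint length-two paths through fresh auxiliary nodes $u_i, w_i$ carry total biobjective costs $(0, a_i)$ and $(a_i, 0)$. Each $s$-$t$-path then encodes a subset $S \subseteq [n]$ with cost $\bigl(\sum_{i\in S} a_i,\ 2T - \sum_{i\in S} a_i\bigr)$; since all path costs lie on the line $x_1 + x_2 = 2T$, no cost dominates another, and hence $\YN = \{(k,\, 2T - k) : k \text{ is a realizable subset sum}\}$. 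In particular $(T, T) \in \YN$ iff the PARTITION instance is a yes-instance.

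Since $|\YN|$ can be exponentially large, the set $M$ in the reduction cannot simply be $\YN$; I would therefore augment the diamond chain with a small outerplanar ``dominator'' sub-structure, attached as a serial pre- or post-extension so outerplanarity is preserved, whose $s$-$t$-paths contribute Pareto-optimal points that dominate every non-extreme subset-sum point on the line $x_1 + x_2 = 2T$ but leave $(T, T)$ undominated whenever it is realized through the chain. The Pareto-front of the augmented instance then consists of a fixed polynomial-size collection of points (the extremes $(0, 2T), (2T, 0)$ plus the gadget's outputs) together with the midpoint $(T, T)$ exactly when PARTITION is a yes-instance. Taking $M$ to be this fixed polynomial-size collection (excluding $(T, T)$) yields $M = \YN$ iff PARTITION is a no-instance, which establishes $\cconp$-hardness of $\finished{\text{MOSP}}$ on outerplanar graphs; since $\cp = \cconp$ iff $\cp = \cnp$, the conclusion follows.

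The main technical obstacle is the dominator gadget. The naive choice of a single dominating shortcut arc from $s$ directly to $t$ breaks outerplanarity, because together with any two internally disjoint $s$-$t$ paths of the diamond chain it exhibits a $K_{2,3}$-minor. The gadget therefore has to be realized as a pre- or post-attached outerplanar sub-chain (rather than a parallel shortcut), and its arc weights must be delicately tuned so that the induced dominations strike exactly the non-extreme points on the line $x_1 + x_2 = 2T$ while sparing the midpoint. Verifying that such an outerplanar dominator exists and interacts correctly with the diamond chain is the nontrivial step of the argument; the remaining pieces (polynomiality of the reduction, membership of $\finished{\text{MOSP}}$ in $\cconp$, and the final appeal to Lemma~\ref{lemma:fin}) are routine.
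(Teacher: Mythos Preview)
Your overall strategy---reduce a subset-sum type problem to $\finished{\text{MOSP}}$ and invoke Lemma~\ref{lemma:fin}---is exactly what the paper does. The gap is in the dominator gadget, and it is not just ``the nontrivial step'': the serial approach you propose cannot work in principle. If the gadget is attached in series (a pre- or post-extension), then \emph{every} $s$-$t$-path still traverses the entire diamond chain and therefore picks up a cost of the form $(k,\,2T-k)$ for some realizable subset sum $k$; the gadget merely shifts all these points by a common vector (or a choice of finitely many vectors). Among the points with minimal gadget-shift they all lie on a single line $x_1+x_2=\text{const}$, so none dominates another and $\YN$ remains of exponential size. A dominator must supply \emph{parallel} $s$-$t$-routes that avoid the chain entirely; there is no way around this.

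The paper resolves the outerplanarity conflict not by a clever serial gadget but by using a sparser chain: instead of two length-$2$ detours through $u_i$ and $w_i$, it uses one direct arc $(v_{i-1},v_i)$ and one length-$2$ detour. The encoding of subset choices is identical, but now the ``lower side'' of the chain is free, so one may add a direct arc $(s,t)$ and a single two-edge path $(s,v,t)$ as parallel dominators without creating a $K_{2,3}$ minor (the key point: in every triple of internally disjoint $s$-$v_i$ or $s$-$t$ paths, at least one is a single arc, so no $K_{2,3}$ arises). With costs $(T+1,0)$ and $(0,T+1)$ on these two extra routes---no delicate tuning needed---they dominate every $(k,2T-k)$ with $k\neq T$ while leaving $(T,T)$ untouched, and $M$ consists of exactly those two points. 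The same works verbatim for your PARTITION reduction once you replace diamonds by triangles.
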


\begin{proof}
We will show that $\finished{\text{MOSP}}$ is $\cconp$-hard.
By Lemma~\ref{lemma:fin} this shows that a $\totalp$ algorithm for MOSP implies $\cp = \cconp$ and thus $\cp = \cnp$.

We reduce instances of the complement of the Knapsack problem:
\begin{align*}
(\text{KP})\ \{ (c^1, c^2, k_1, k_2) \mid  {c^1}^T x \leq k_1, {c^2}^Tx \geq k_2, x \in \{0,1\}^n\}\,.
\end{align*}
Without loss of generality, we can assume that $c^1, c^2 \in \NN^n$, $k_1, k_2 \in \NN$, ${c^1}^T\mathbf{1} > k_1$ and ${c^2}^T\mathbf{1} > k_2$.
The problem above is still $\cnp$-complete under these restrictions, cf.~\citep{KPP04}.

\begin{figure}[tb]
	\begin{center}
	\subfigure{
	\includegraphics[scale=0.5]{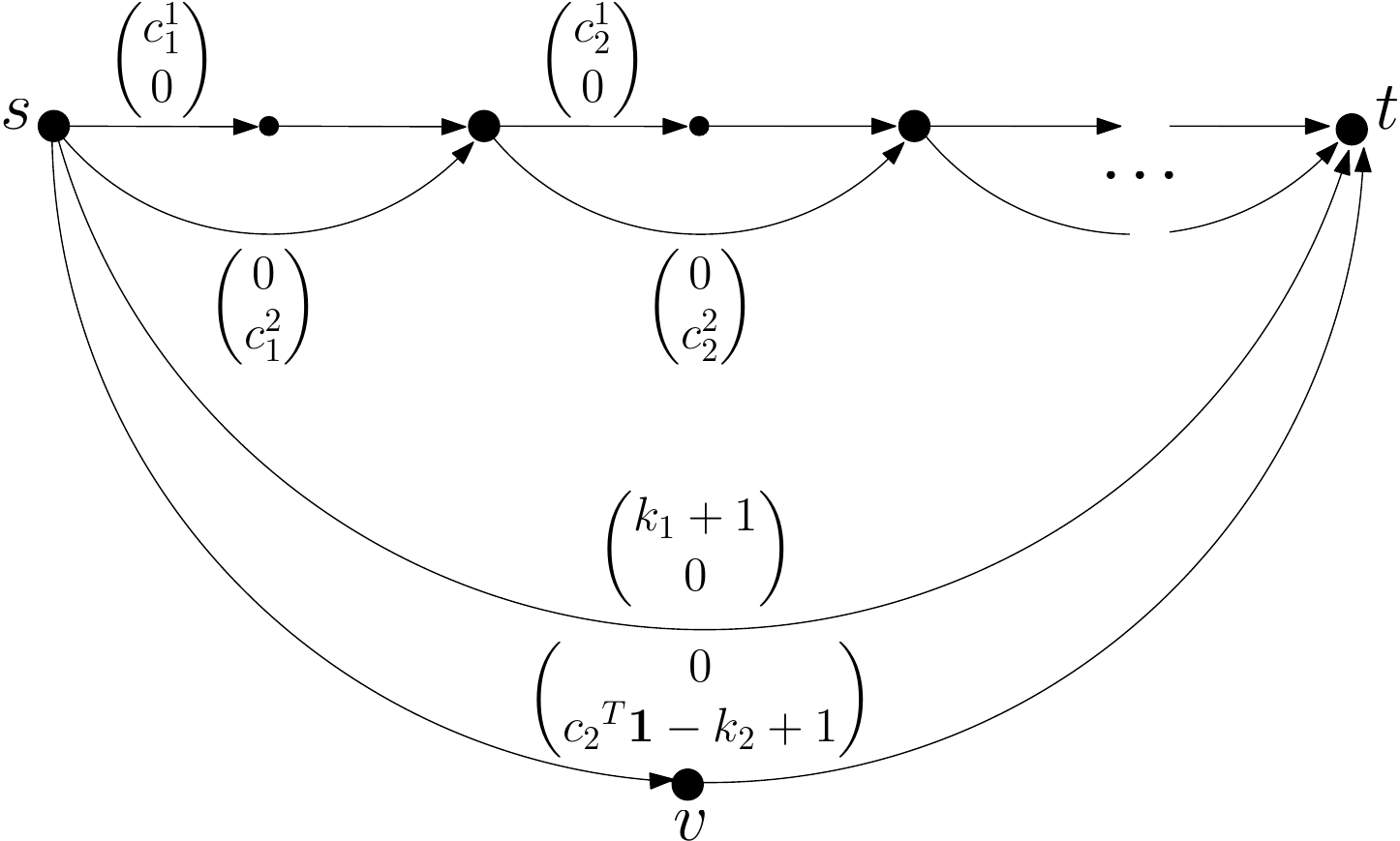}
	}
	\subfigure{
	}
	\end{center}
	\caption{Showing the reduction in the proof for Theorem~\ref{thm:mosp_hard}. Arcs with no label have cost $\mathbf{0}$.}
	\label{fig:mosp}
\end{figure}

We now construct an instance $\hat{I}$ of the $\finished{\text{MOSP}}$ problem from an instance $I$ of the KP problem.
The instance has nodes $\{v_i^1, v_i^2\}$ for every variable $x_i$ and one additional node $v_{n+1}^1$.
It has one arc for every $i\in [1\! :\! n] \colon (v_i^1, v_i^2)$ with cost $(c^1_i , 0)^T$ and for every $i\in [1\! :\! n]$ it has one arc $(v_i^1, v_{i+1}^1)$ with cost $(0, c^2_i)^T$ and one arc $(v_i^2, v_{i+1}^1)$ with cost $\mathbf{0}$.
The node $v_1^1$ is identified with $s$, the node $v_{n+1}^1$ is identified with $t$.
There is one additional arc $(s,t)$ with cost $(k_1 + 1, 0)^T$ and one additional path $(s, v, t)$ with cost $(0, {c^2}^T\mathbf{1} - k_2+1)$.
To complete the reduction, we set $M := \{(k_1 + 1, 0)^T, (0, {c^2}^T\mathbf{1} - k_2+1)^T\}$.
An example of this reduction can be seen in Figure~\ref{fig:mosp}.

We observe that the instance is valid, and that there are at least two Pareto-optimal paths, namely $(s,t)$ and $(s,v,t)$ having cost $(k_1 + 1, 0)^T$ and $(0, {c^2}^T\mathbf{1} - k_2+1)^T$, respectively.
Both paths are Pareto-optimal because $c^1_i,c^2_i > 0 $ for all $i\in [1\! :\! n]$ and thus all other paths have either non-zero components in their objective function values or the sum of all values in one objective function and $0$ in the other.
All steps can be performed in polynomial time in the input instance $I$.

Now, we take an instances $I\in \text{KP}$.
Accordingly, there exists $x\in\{0,1\}^n$ with ${c^1}^Tx \leq k_1$ and ${c^2}^Tx \geq k_2 \Leftrightarrow {c^2}^T(\mathbf{1}-x) \leq {c^2}^T\mathbf{1} - k_2$.
Using this solution, we construct a path $P$ in the MOSP instance $\hat{I}$: For every $i$ with $x_i = 1$, we take the route through node $v^2_i$, inducing cost of $(c^1_i, 0)^T$.
For every $i$ with $x_i = 0$, we take the route directly through arc $(v^1_i, v^1_{i+1})$, inducing cost $(0, c^2_i)^T$.
We observe that $c_1(P) = {c^1}^Tx \leq k_1$ and $c_2(P) = {c^2}^T (\mathbf{1} - x) \leq {c^2}^T\mathbf{1} - k_2$.
But then, $\hat{I}\notin \finished{\text{MOSP}}$, since $P$ is neither dominated by $(s,t)$ nor $(s,v,t)$.

Now, suppose for some instance $I$, the constructed instance $\hat{I} \notin \finished{\text{MOSP}}$, i.e., there is an additional nondominated path $P$ apart from $(s,t)$ and $(s,v,t)$.
Since it is not dominated by $(s,t)$ and $(s,v,t)$, it must hold that $c_1(P) \leq k_1$ and $c_2(P) \leq {c^2}^T\mathbf{1} - k_2$.
But then, we can construct a solution to $\text{KP}$ in $I$ as follows:
The path $P$ cannot take arcs from $(s,t)$ or $(s,v,t)$, so it needs to take the route through the $v_i^1$ and $v_i^2$ nodes.
For every $i\in [1\! :\! n]$ it can only either take arc $(v_i^1, v_i^2)$ or $(v_i^1, v_{i+1}^1)$.
If it takes the first arc, we set $x_i := 1$; if it takes the second arc, we set $x_i := 0$.
This solution then has cost ${c^1}^Tx = c_1(P) \leq k_1$ and ${c^2}^Tx = {c^2}^T\mathbf{1} - c_2(P) \geq {c^2}^T\mathbf{1} - {c^2}^T\mathbf{1} + k_2 = k_2$.
Hence, $I\in \text{KP}$.

This proves that the reduction is a polynomial time reduction from the complement of KP to the finished decision variant of MOSP and thus the theorem.
\end{proof}

The proof also shows that deciding whether the Pareto-front of a MO $s$-$t$-path instance is larger than $2$ is $\cnp$-hard.
Moreover, it gives us a lower bound on the quality we can approximate the size of the Pareto-front.
\begin{corollary}
	It is $\cconp$-hard to approximate the size of the Pareto-front of the MO $s$-$t$-path problem within a factor better than $\frac{3}{2}$.
\end{corollary}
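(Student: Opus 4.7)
The plan is to read the corollary directly off the reduction constructed in the proof of Theorem~\ref{thm:mosp_hard}. First I would make explicit that the instance $\hat I$ built there from a Knapsack instance $I$ admits a clean $2$-vs-$3$ gap in the Pareto-front size $|\YN|$. The two paths $(s,t)$ and $(s,v,t)$ are unconditionally nondominated; moreover, the proof of Theorem~\ref{thm:mosp_hard} already establishes both halves of the needed gap: when $I$ lies in the complement of Knapsack, these two paths constitute the \emph{entire} Pareto-front, so $|\YN|=2$; when $I$ is a Knapsack yes-instance, the ``middle'' path built from the Knapsack certificate contributes a third nondominated point, so $|\YN|\ge 3$.

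Next I would suppose, for contradiction, that there is a polynomial-time algorithm returning, on input $\hat I$, a value $A$ with $|\YN|\le A\le \rho\,|\YN|$ for some fixed $\rho<3/2$. Then $A\le 2\rho<3$ whenever $|\YN|=2$, and $A\ge 3$ whenever $|\YN|\ge 3$. Consequently the test ``is $A<3$?'' separates the two cases in polynomial time; an entirely symmetric argument handles the one-sided convention $|\YN|/\rho\le A\le |\YN|$ by thresholding at $A>2$.

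Composing this decision procedure with the polynomial-time Karp reduction in the proof of Theorem~\ref{thm:mosp_hard} yields a polynomial-time algorithm for the complement of Knapsack, and thereby places a $\cconp$-complete language in $\cp$. Hence approximating $|\YN|$ for the MO $s$-$t$-path problem within any factor strictly below $3/2$ is $\cconp$-hard. The main conceptual step, and the only substantive addition over the proof of Theorem~\ref{thm:mosp_hard}, is the quantitative observation that the Pareto-front size jumps cleanly from exactly $2$ to at least $3$ across the reduction; this is what produces the factor $3/2$ in the statement.
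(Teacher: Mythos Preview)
Your proposal is correct and follows exactly the line the paper intends: the paper does not spell out a separate proof for the corollary but immediately precedes it with the remark that the reduction in Theorem~\ref{thm:mosp_hard} shows that deciding whether $|\YN|>2$ is $\cnp$-hard, and your argument is the standard gap amplification reading of that observation. The only content you add beyond the paper is making the threshold argument explicit, which is precisely what the paper leaves to the reader.
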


Since MOSP can be seen as a special case of many important MOCO problems, e.g., the multiobjective minimum perfect matching problem and the multiobjective minimum-cost flow problem, these problems also turn out to be hard.

\section{Enumerating Supported Efficient Solutions and Nondominated Extreme Points} \label{sec:supported}

In the history of multiobjective optimization, it has been repeatedly observed that in the case of two objectives a certain subset of the Pareto-front can be enumerated efficiently.
This subset is usually defined as the set of Pareto-optimal solutions that can be obtained by using \new{weighted-sum scalarizations} of a MOCO problem $P$, i.e.,
\begin{align*}
	(\ws{P}{\ell})\ \min \{\ell^TCx \mid x \in \cosolutions\}, \text{ for }\ell \geq 0\,.
\end{align*}

One of the pioneering works in this respect was the discovery of the \emph{Dichotomic Approach} (DA) independently by~\citet{Ane+1979, Coh1978, D79}.
The algorithm finds the \emph{extreme supported} points of the Pareto-front of a biobjective combinatorial optimization problem, i.e., the nondominated extreme points of $\conv c(\cosolutions)$.
We will give different definitions for these terms later in this section.
Especially, Aneja and Nair showed that if we have access to an algorithm $A$ which solves a lexicographic version of the MOCO problem, we can find the set of nondominated extreme points of size $k\geq 2$ by needing $2k -1$ calls to $A$.
The theory of output-sensitive complexity is able to add more ground to the intuition that the set of nondominated extreme points can be computed efficiently for every fixed $d\geq 2$.

A more general view on supported Pareto-optimal solutions can be obtained by extending the view to \new{multiobjective linear programming}.
\begin{definition}[Multiobjective Linear Programming (MOLP)]\label{def:molp}
	Given matrices $A\in \QQ^{m\times n}, C\in\QQ^{d\times n}$ and a vector $b\in\QQ^m$, enumerate the extreme points $\YX$ of the polyhedron $\mathcal{P} := \{Cx \mid Ax\geq b\} + \RR^d_\geq$. We also write an MOLP in the form
	\begin{align*}
		\min\, &Cx\\
		&Ax \geq b\,.
	\end{align*}
\end{definition}
We call the polyhedron $\mathcal{P}$ the \new{upper image} of a given MOLP instance, cf.~also \citep{HLR13}), and set $\Y := \{Cx \mid Ax \geq b\}$.
In the definition of the MOLP problem, we again only allow rational numbers, as real numbers cannot be encoded in our model of computation.

We stress here that this is not the only possible definition of MOLP.
One could also be interested in finding the nondominated facets of the upper image and this would result in a different computational problem.
But for the study of extreme points of MOCO problems, the above problem will serve us better.

We observe that there is a difference between the above version of the MOLP problem and the problem of enumerating all Pareto-optimal basic feasible solutions:
The latter problem cannot be solved in output-polynomial time unless $\cp = \cnp$, because it subsumes the vertex enumeration problem even when allowing only two objectives. \citet{KBBEG08} proved that the vertex enumeration problem for general polyhedra cannot be solved in output-polynomial time unless $\cp = \cnp$.
But we conjecture that there exists an output-sensitive algorithm for the MOLP problem as in Definition~\ref{def:molp}.

The central connection between supported efficient solutions and supported nondominated points and MOLP lies in the concept of convex relaxations of MOCO problems, cf. \citep{EG07,CPG15}.
\begin{definition}[Convex Relaxation]\label{def:convex_relaxation}
	Given a MOCO problem $P=(\cosolutions, C)$, the \new{convex relaxation} of $P$ is the MOLP
\begin{align*}
\min\,&Cx \\
&x \in \conv \cosolutions\,.
\end{align*}
\end{definition}
Definition~\ref{def:convex_relaxation} hides the fact, that a convex relaxation is not in the correct form of Definition~\ref{def:molp}.
Although the convex relaxation of a MOCO problem exists in general, it might be computationally hard to obtain a representation of the MOLP as in Definition~\ref{def:molp} (i.e., an inequality representation), as it might have a large number of constraints or it might have large numbers in the matrix $A$.
But the convex relaxation will serve as a theoretical vehicle to define the extreme points of a MOCO problem and show some connections between MOCO problems and MOLP.

Accordingly, we define the \emph{nondominated extreme points} $\YX$ of a MOCO problem $P$ to be the extreme points of the upper image  of the associated convex relaxation.
An efficient solution $x$ to a MOCO problem $P$ is called \emph{supported}, if $x$ is an efficient solution to the convex relaxation of $P$.


In the following sections, we will first review some literature to shed some lights on solving MOLP in the output-sensitivity framework in Section~\ref{sec:molp}.
Then in Section~\ref{sec:extreme} we are concerned with finding nondominated extreme points to MOCO problems and show how the results from Section~\ref{sec:molp} can be applied to MOCO problems using the relation between a MOCO problem and its convex relaxation.

\subsection{Multiobjective Linear Programming} \label{sec:molp}

There are many ways of solving MOLP problems in the multiobjective optimization literature.
But performance guarantees are seldomly given.
Recently, Bökler and Mutzel proved that MOLP, under some restrictions, is solvable efficiently in the theory of output-sensitive complexity~\citep{BM15}.
We will first review this result and later show that in some special cases better running times can be achieved.

\subsubsection{The General Case with an Arbitrary Number of Objectives}\label{sec:benson}

In \citep{BM15}, the authors prove the following theorem: If the ideal point of an MOLP exists, we can enumerate the extreme points of the upper image in output-polynomial time. 

\begin{theorem}[Enumeration of Extreme Points of MOLP having an ideal point~\citep{BM15}]\label{thm:molp}
	If an ideal point of an MOLP $(C, A, b)$  exists, the extreme points $\YX$ of $\mathcal{P}$ can be enumerated in time $\mathcal{O}(|\YX|^{\lfloor \frac{d}{2} \rfloor}(\poly(\langle A\rangle, \langle C\rangle) + |\YX | \log |\YX |))$ for each fixed number $d$ of objectives.
\end{theorem}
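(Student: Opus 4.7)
My plan is to prove the theorem via a variant of Benson's outer approximation algorithm for MOLP. Existence of the ideal point $y^I := (\min\{C_i^T x \mid Ax \geq b\})_{i=1,\dots,d}$ means that the upper image $\mathcal{P}$ is contained in $y^I + \RR^d_\geq$ and has recession cone exactly $\RR^d_\geq$; in particular, $\mathcal{P}$ is pointed modulo its recession cone, so its vertices are precisely $\YX$. I would start from the outer approximation $\mathcal{P}_0 := y^I + \RR^d_\geq$ (whose sole vertex is $y^I$) and, in each iteration $k$, pick a vertex $v$ of the current approximation $\mathcal{P}_k$ with $v \notin \mathcal{P}$. Using a supporting-hyperplane LP of size polynomial in $\langle A \rangle + \langle C \rangle$, I would project $v$ onto $\mathcal{P}$ and obtain a facet-defining inequality $H$ of $\mathcal{P}$ that separates $v$; then set $\mathcal{P}_{k+1} := \mathcal{P}_k \cap H^+$. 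The algorithm terminates once every vertex of the current approximation lies in $\mathcal{P}$, at which point $\mathcal{P}_k = \mathcal{P}$ and its vertex set equals $\YX$.

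To bound the number of iterations, observe that each iteration adds a distinct facet of $\mathcal{P}$, so the total count is at most the number of facets of $\mathcal{P}$. Applying the Upper Bound Theorem of McMullen to the pointed part of $\mathcal{P}$ (which is a $d$-dimensional polyhedron with $|\YX|$ vertices) this number is $\mathcal{O}(|\YX|^{\lfloor d/2 \rfloor})$ for fixed $d$. Per iteration, the separating LP costs $\poly(\langle A \rangle, \langle C \rangle)$ via the ellipsoid or interior-point method, and updating the double description of the outer approximation after adding one half-space costs $\mathcal{O}(|\YX|\log|\YX|)$ if one maintains vertex--facet incidences in an online convex hull data structure; working in the polar $\mathcal{P}^{\circ}$ (taken with respect to $y^I$), vertices of $\mathcal{P}$ correspond to facets of $\mathcal{P}^{\circ}$ and vice versa, so adding a facet in the primal is equivalent to inserting a point in the dual online convex hull. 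Summing over iterations gives the claimed bound.

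The main obstacle will be guaranteeing that the intermediate polyhedra $\mathcal{P}_k$ do not blow up in vertex count: a priori their vertices need not be vertices of $\mathcal{P}$. I would overcome this by arguing that every intermediate vertex is either (i) already a vertex of $\mathcal{P}$, or (ii) lies strictly above some facet of $\mathcal{P}$ that has not yet been inserted, in which case it is destroyed by a later iteration and can be ``charged'' to a vertex--facet incidence of $\mathcal{P}$, which is again controlled by the Upper Bound Theorem. The polar viewpoint makes this charging precise, because it reduces vertex enumeration of the intermediate approximations to output-sensitive convex-hull construction in dimension $d$, where the cumulative vertex count is provably polynomial in $|\YX|$. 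A secondary concern is bit-size control: since every facet inequality returned by the separating LP arises as a basic feasible solution of an LP in $A, b, C$, its coefficients remain polynomially bounded in $\langle A \rangle + \langle C \rangle$, which is what permits the $\poly(\langle A\rangle, \langle C\rangle)$ factor in the stated running time rather than a factor depending additionally on $|\YX|$.
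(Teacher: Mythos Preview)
Your plan is close to what the paper does, but there are two concrete differences---one structural, one a genuine gap.

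\textbf{Structural difference.} The paper does not run Benson's outer approximation on the upper image $\mathcal{P}$ itself. It runs the \emph{dual} Benson algorithm of \citet{Ehr+2012} on the \emph{lower image} $\mathcal{D}$, which is the geometric dual of $\mathcal{P}$ in the sense of \citet{Hey+2008}. Under this duality, facets of $\mathcal{D}$ correspond to the extreme points $\YX$ of $\mathcal{P}$ (so there are exactly $|\YX|$ of them), and vertices of $\mathcal{D}$ correspond to the facets of $\mathcal{P}$ (so there are $\mathcal{O}(|\YX|^{\lfloor d/2\rfloor})$ of them by the Upper Bound Theorem). The $|\YX|^{\lfloor d/2\rfloor}$ factor in the running time then arises from the total number of vertices that the approximation of $\mathcal{D}$ accumulates, and the inner $|\YX|\log|\YX|$ term from maintaining the at most $|\YX|$ facet inequalities. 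Your ``work in the polar $\mathcal{P}^{\circ}$'' remark is heading in this direction, but the paper uses the specific MOLP geometric dual rather than the ordinary polar, and this swap of the roles of vertices and facets is exactly what makes your intermediate-vertex-blow-up worry disappear cleanly rather than requiring an ad hoc charging argument.

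\textbf{Genuine gap.} You assert that the separation step ``obtain[s] a facet-defining inequality $H$ of $\mathcal{P}$''. A single weighted-sum LP $\ws{P}{\ell}$ only guarantees a \emph{supporting} hyperplane; it may meet the image in a lower-dimensional face, in which case the same region of the boundary can be cut repeatedly and the iteration count is no longer bounded by the number of facets. The paper treats this explicitly: to guarantee that every returned inequality is facet-supporting, the weighted-sum LP is replaced by the lexicographic LP $\lws{P}{\ell}$, which is still polynomial-time solvable. Without this refinement (or an equivalent one), your bound on the number of iterations by the number of facets of $\mathcal{P}$ is not justified.
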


The ideal point of an MOLP is the point $(\min \{ C_i x \mid x \in \cosolutions\})_{i\in [d]}$, where $C_i$ is the $i$-th row of $C$.
We remark that an ideal point does not exist for every MOLP, even though the MOLP is feasible, e.g., if one of the linear programming problems $\min \{ C_i x \mid x \in \cosolutions\}$ does not have a finite optimal value.

To achieve this result, the authors conducted a running time analysis of a variant of Benson's algorithm.
Benson's algorithm was proposed by~\citet{B98} and the mentioned variant, called the dual Benson algorithm, was proposed by~\citet{Ehr+2012}.

From a highlevel point of view, the algorithm computes a representation of a polyhedron $\mathcal{D}$, called \new{lower image}, which is geometrically dual to the upper image $\mathcal{P}$.
The theory of geometric duality of MOLP was introduced by~\citet{Hey+2008}, and in some sense discovered independently by \citet{Prz+2010}.
The representation consists of the extreme points and facet inequalities which define $\mathcal{D}$.
Because of geometric duality of MOLP, the extreme points and facet inequalities of $\mathcal{D}$ correspond to the facet inequalities and extreme points of $\mathcal{P}$, respectively, and can be efficiently computed from them.

To compute the extreme points and facet inequalities of $\mathcal{D}$, the algorithm first computes a polyhedron $P_0$ which contains $\mathcal{D}$ entirely.
Then, in each iteration $i$, it discovers one extreme point $v$ of the polyhedron $P_i$ and checks if $v$ lies on the boundary of $\mathcal{D}$.
If this is the case, the point is saved as a candidate extreme point of $\mathcal{D}$.
If this is not the case, the algorithm finds an inequality which separates $v$ from $\mathcal{D}$ and supports $\mathcal{D}$ in a face.
Then, $P_{i+1}$ is computed by intersecting $P_i$ with the halfspace defined by the inequality found.
In the end, redundant points are removed from the set of candidates and the remaining set is output.

To check if $v$ lies on the boundary of $\mathcal{D}$ and to compute the separating hyperplane, we only have to solve a weighted-sum LP of the original MOLP P:
\begin{align*}
	(\ws{P}{\ell})\ \min\ \{\ell^TCx \mid Ax \geq b \},
\end{align*}
where $\ell\in \QQ^d$ and $||\ell||_1 = 1$.
Another important subproblem to solve is a variant of the well-known vertex enumeration problem.

To arrive at the running time of Theorem~\ref{thm:molp}, Bökler and Mutzel also provide an improvement to the algorithm by guaranteeing to find facet supporting inequalities exclusively.
This is possible at the expense of solving a lexicographic linear programming problem
\begin{align*}
	(\lws{P}{\ell})\ \lexmin\ \{(\ell^TCx, c_1x, \dots, c_dx) \mid Ax \geq b \},
\end{align*}
which turns out to be polynomial-time solvable in general.

\subsubsection{The Special Case with Two Objectives}

Now let $P$ be a biobjective linear program. In the following we present a polynomial delay algorithm, which only needs polynomial space in the size of the input, for enumerating nondominated extreme points. We assume that we have access to a solver for the lexicographic linear programming problem. The idea of the algorithm is the following: We start by computing a nondominated extreme point $y^1$ that minimizes the second objective, i.e., we solve $\lws{P}{(0,1)}$. Then, by solving a lexicographic linear program, a facet $F$ of $\cY$ is computed that supports $y^1$ such that $y^1$ is the lexicographic maximum of $F$. By solving another lexicographic linear program, the algorithm finds the other nondominated  extreme point $y^*$ that is also supported by $F$, see Figure~\ref{fig:delayps} for a geometric illustration. We then restart the procedure with $y^*$. The algorithm terminates when the nondominated extreme point is found that minimizes the first objective.

\begin{figure}[H]
\begin{center}
\includegraphics[scale=0.7]{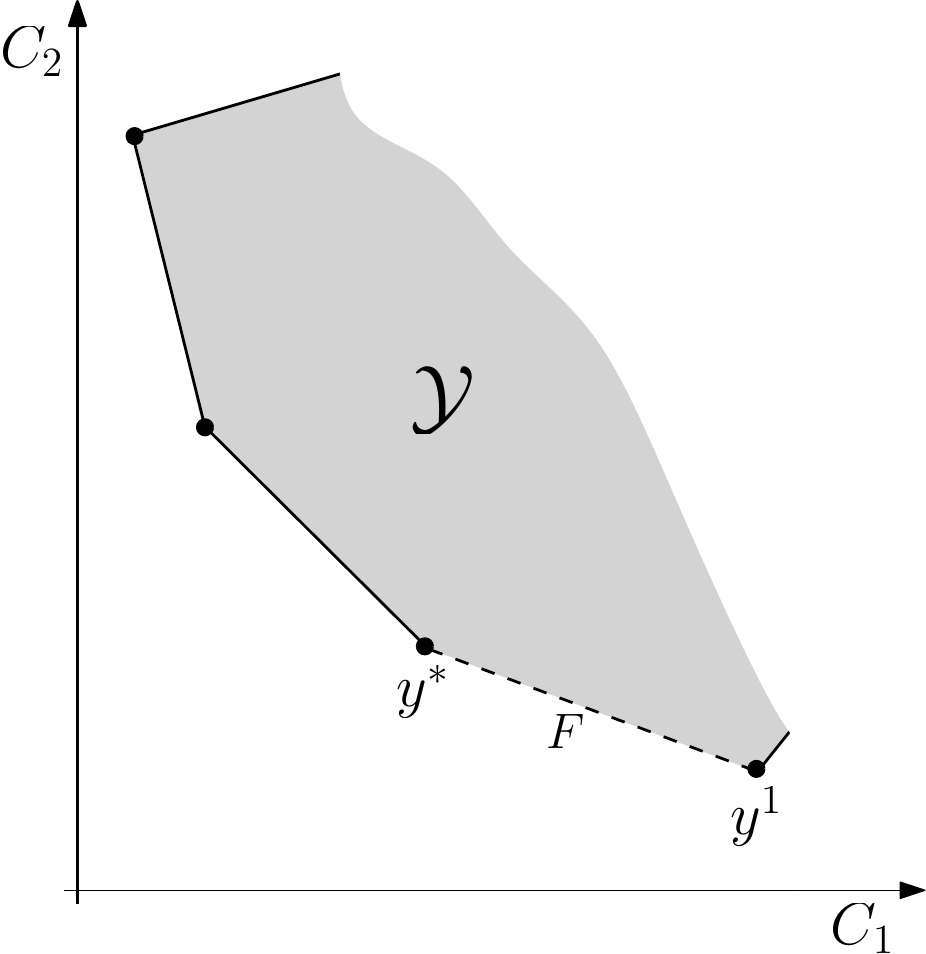}
\end{center}
\caption{Illustration of the idea of the algorithm with polynomial delay and polynomial space.}
\label{fig:delayps}
\end{figure}

In order to compute a facet that supports a nondominated extreme point $y$ of $P$, we need the LP $D_2(y)$, cf.~\citep{Ise1974}:
\begin{equation*}\label{e:d2}\tag{$D_2(y)$}
\begin{aligned}
&\text{maximize} & & b\trans u - y\trans \lambda\\
&\text{subject to} & & (u, \lambda) \geq  \mathbf{0}^{m+p} \\
&&& A\trans u = C\trans \lambda \\
&&& (\mathbf{1}^{p})\trans \lambda = 1\\
&&& (u, \lambda) \in \bbQ^{m + p}\,.
\end{aligned}
\end{equation*}

We get the following result.

\begin{lemma}\label{l:wf}
Let $P$ be a biobjective linear program, let $y$ be a nondominated extreme point for $P$ and let $(u, \lambda)$ be a maximum for $D_2(y)$ such that there is no other maximum $(\bar{u}, \bar{\lambda})$ for $D_2(y)$ so that  $\lambda <_{\lex} \bar{\lambda}$. Then $F := \lb y \in \bbR^2 \mathrel{\big|} \lambda\trans y = b\trans u \rb$ is a facet of $\cY$ that supports $\cY$ in $y$ such that $y$ is the lexicographic maximum of $F$.
\end{lemma}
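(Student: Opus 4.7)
The plan is to read off the geometry of $\cY$ from LP duality between $D_2(y)$ and the weighted-sum scalarizations $(\ws{P}{\lambda})$, and then translate the lex-max condition on $\lambda$ into the statement that the hyperplane $F$ is facet-defining with $y$ at its lex-max endpoint.

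First I would show that $D_2(y)$ has optimal value $0$. Weak LP duality applied to the weighted-sum problem $(\ws{P}{\lambda})$ gives $b\trans u \le \lambda\trans Cx$ for every feasible $(u,\lambda)$ of $D_2(y)$ and every $x$ with $Ax\ge b$; choosing $x$ as a preimage of $y$ yields $b\trans u - \lambda\trans y \le 0$, so the objective of $D_2(y)$ is nonpositive. Conversely, because $y$ is a nondominated extreme point of the upper image, there is some $\lambda \ge \mathbf{0}$ with $(\mathbf{1}^p)\trans\lambda = 1$ such that $y$ minimizes $\lambda\trans Cx$ over $\{x\colon Ax\ge b\}$; strong duality then furnishes a $u\ge \mathbf{0}$ with $A\trans u = C\trans \lambda$ and $b\trans u = \lambda\trans y$, attaining $0$. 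Hence every maximum of $D_2(y)$ satisfies $b\trans u = \lambda\trans y$, i.e., $y\in F$, and the weak-duality inequality $\lambda\trans z \ge b\trans u$ for all $z\in \cY$ shows that $F$ supports $\cY$ from below at $y$.

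Next I would identify the set of $\lambda$ that occur in maxima of $D_2(y)$. Let $\Lambda_y$ denote the set of $\lambda \ge \mathbf{0}$ on the unit simplex for which $y$ minimizes $\lambda\trans Cx$ over $\{Ax\ge b\}$. The preceding step shows that this is precisely the $\lambda$-projection of the optimum face of $D_2(y)$. The map $\lambda \mapsto \min_{Ax\ge b} \lambda\trans Cx$ is concave and piecewise linear on the simplex, so $\Lambda_y$ is a closed interval in the one-dimensional simplex. Since $d=2$, maximizing $\lambda$ lexicographically coincides with maximizing $\lambda_1$, so our hypothesis selects the right endpoint $\lambda^\star$ of $\Lambda_y$. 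At this endpoint, either $\lambda^\star = e_1$ (the boundary case, corresponding to $y$ minimizing the first objective, which is the algorithm's termination condition), or by the piecewise linearity of the value function a second nondominated extreme point $y'\neq y$ is also $\lambda^\star$-optimal. By convexity the entire segment $\overline{yy'}$ lies on the hyperplane $F$ and in $\cY$, and is thus a $1$-dimensional face of $\cY$, i.e., the desired facet.

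The lex-max claim then follows from orientation: increasing $\lambda_1$ in the outward normal to a supporting hyperplane of $\cY$ rotates that hyperplane so it supports points further to the left in the $(z_1,z_2)$-plane; hence $y'$ has strictly smaller first coordinate than $y$. Since the lexicographic order on $\bbR^2$ compares $z_1$ before $z_2$, $y$ is the lex-maximum of the edge $\overline{yy'} = F\cap \cY$. The main obstacle is the second step, namely justifying that the lex-max specification on $\lambda$ is \emph{precisely} what forces $F$ to be a facet-supporting rather than a vertex-supporting hyperplane: any $\lambda$ in the relative interior of $\Lambda_y$ would produce a line through $y$ meeting $\cY$ only in the single point $y$, and it is exactly the push to the boundary of $\Lambda_y$ which brings the neighbouring extreme point $y'$ onto $F$.
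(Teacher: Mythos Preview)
Your argument is essentially correct, but it takes a genuinely different route from the paper. The paper does not prove the lemma at all in a self-contained way: its entire proof reads ``The result easily follows from Proposition~4.2 in \citet{Ehr+2012}.'' In other words, the paper outsources the work to the geometric-duality framework for MOLP developed by Ehrgott, L\"ohne and Shao, where the correspondence between facets of the upper image and optimal solutions of the dual $D_2(y)$ is already established.

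Your approach instead builds the result from first principles via LP duality for the weighted-sum scalarizations $\ws{P}{\lambda}$: you show that the optimal value of $D_2(y)$ is $0$, identify the $\lambda$-projection of its optimal face with the normal cone $\Lambda_y$ of $\cY$ at $y$ (restricted to the simplex), observe that in dimension two this is a closed interval, and argue that the lex-max choice pushes $\lambda$ to the endpoint of $\Lambda_y$ with largest first coordinate, where a second extreme point $y'$ becomes co-optimal and the supporting line becomes facet-defining. The orientation step then gives $y'_1 < y_1$, so $y$ is the lexicographic maximum of the edge. This is a clean normal-fan argument and is more elementary and self-contained than invoking the cited proposition; the price is that you must handle the boundary case $\lambda^\star = e_1$ separately (you correctly flag it as the algorithm's termination case, though strictly speaking the lemma as stated does not exclude this $y$, and there the ``facet'' is the unbounded vertical edge of the upper image, on which $y$ is the lex-\emph{minimum} rather than the lex-maximum---so the lemma should really be read for $y$ not minimizing the first objective, which is how the algorithm uses it). Apart from that caveat, your argument is sound.
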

\begin{proof}
The result easily follows from Proposition 4.2 in~\citet{Ehr+2012}.
\end{proof}

Moreover, in order to compute the other extreme point of $F$, we can simply add the constraint
\begin{equation*}
\lambda\trans Cx = b\trans u
\end{equation*}
to $P$ and solve the lexicographic linear programm for $\lws{P}{\lambda}$, where $(u, \lambda)$ is a maximum of $D_2(y)$ as stated in Lemma~\ref{l:wf}. We get the following result.

\begin{theorem}\label{pspd}
Let $P$ be a biobjective linear program, then the above described algorithm outputs the nondominated extreme points of $P$ with polynomial delay \emph{and} polynomial space.
\end{theorem}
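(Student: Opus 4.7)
The plan is to establish three claims for the described algorithm: correctness (exactly the elements of $\YX$ are output, each once), polynomial delay between consecutive outputs, and polynomial working space. All three will follow once it is shown that each iteration performs $O(1)$ lexicographic linear programs and that each iteration is in canonical bijection with a distinct edge of the polygonal nondominated chain of $\cY$.

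For correctness, I would exploit that in the biobjective setting the nondominated extreme points form a polygonal chain on the boundary of the upper image $\cY + \RR^2_\geq$. Order them $y^1, \dots, y^r$ by increasing first coordinate; then the second coordinates strictly decrease along the chain, and the bounded facets of $\cY$ that meet $\YN$ are exactly the segments $\conv\{y^i, y^{i+1}\}$. The bootstrap $\lws{P}{(0,1)}$ minimizes $c_2 x$ first and then $c_1 x$, so it returns the nondominated extreme point with the smallest second coordinate, namely $y^r$. Lemma~\ref{l:wf} applied to the current iterate $y^i$ yields a facet $F$ whose lex-maximum vertex is $y^i$; since $F$ is an edge of $\cY$ adjacent to $y^i$ in the chain, and the lex-maximum picks the endpoint with the larger first coordinate, $F$ must be the edge $\conv\{y^{i-1}, y^i\}$. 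Appending the constraint $\lambda^\top Cx = b^\top u$ to $P$ and solving $\lws{P}{\lambda}$ then locks the weighted sum on this line and lex-minimizes the individual coordinates, returning the endpoint with the smaller first coordinate, i.e.\ $y^{i-1}$. Iterating moves strictly leftward through the chain, so every nondominated extreme point is emitted exactly once; the process terminates once the emitted point coincides with the minimizer of $c_1 x$, which can be recognized via a one-time comparison with a precomputed solution of $\lws{P}{(1,0)}$.

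For polynomial delay and space, each iteration solves only a lexicographic form of $D_2(y)$ (to select the lex-maximal $\lambda$ among dual optima) and one lexicographic LP $\lws{P}{\lambda}$ on an augmented system. Both reduce to standard lex LP, which is polynomial-time solvable as already used in Section~\ref{sec:benson}; hence the time between the emission of $y^i$ and $y^{i-1}$ lies in $\poly(\enc{A}, \enc{C}, \enc{b})$, independently of how many points have been produced so far. The $0$-th and final delays are of the same form. For space, the algorithm's state between emissions is bounded by the current iterate $y$, its immediate successor in the walk, and the workspace of a single LP solver; no cumulative list of previously emitted extreme points needs to be retained, because the leftward walk never revisits a point. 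Since the intermediate LPs have polynomial encoding length and can be solved in polynomial space, the total working space is in $\poly(\enc{A}, \enc{C}, \enc{b})$.

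The main obstacle will be treating degeneracy cleanly in the correctness step. Several dual optima $(u,\lambda)$ of $D_2(y)$ can yield distinct supporting hyperplanes at the same $y$, and without a tie-break the algorithm could pick a supporting halfspace whose intersection with $\cY$ is lower-dimensional than a facet, or it could loop on the same facet; Lemma~\ref{l:wf} fixes this by insisting on the lex-maximal $\lambda$, which singles out a canonical facet whose lex-max vertex is the current $y$ and so guarantees strict leftward progress. A secondary subtlety is unboundedness of $\cY$: the supporting lines at the extremal points $y^1$ and $y^r$ bound unbounded rays of $\cY + \RR^2_\geq$ rather than edges of $\cY$, so the very first and very last iterations must be argued separately. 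Both cases are routine once the termination test against the $c_1$-minimizer is stated explicitly, and together with Lemma~\ref{l:wf} they close the proof.
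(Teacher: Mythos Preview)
Your proposal is correct and follows essentially the same approach as the paper's proof: invoke Lemma~\ref{l:wf} to guarantee that each iteration steps to the adjacent extreme point along the nondominated polygonal chain, observe that a constant number of lexicographic LPs are solved per iteration (the paper cites~\citep{BM15} for this; you appeal to Section~\ref{sec:benson}), and note that only a bounded number of points need be retained between outputs. Your write-up is in fact more explicit than the paper's, which is quite terse; in particular, your discussion of why the lex-maximal $\lambda$ tie-break in Lemma~\ref{l:wf} prevents stalling under degeneracy and your separate handling of the unbounded boundary rays at the two ends of the chain fill in details the paper leaves implicit.
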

\begin{proof}
Let $y^*$ in $\YX$ and let $(u, \lambda)$ be a lexicographic maximum for the lexicographic linear program $D_2(y)$ as stated in Lemma~\ref{l:wf}. Then, by Lemma~\ref{l:wf}, $F := \lb y \in \bbR^2 \mid \lambda\trans y = b\trans u \rb$ is a facet of $\cY$ with $y$ in $F$ such that $y$ is a lexicographic maximum of $F$. At some point, the algorithm will find the point that minimizes the first objective as it is the lexicographic minimum of a facet of $\cY$. Hence, the algorithm is finite.
 
By~\citet{BM15}, each iteration can be computed in polynomial time. In each iteration one nondominated extreme point is output, therefore the $i$-th delay of the algorithm is polynomial in the input size. Moreover, observe that at most three nondominated extreme points have to be kept in memory during each delay. Hence, the space is bounded polynomially in the input size.
\end{proof}

\subsection{Enumerating Supported Nondominated Points}\label{sec:extreme}

Now, we will come back to MOCO problems and show how we can find supported nondominated and extreme nondominated points.
In Section~\ref{sec:bm15} we show how we can make use of the results from Section~\ref{sec:benson}, even when we do not have a compact LP formulation of our MOCO problem.
We show that in the case of two objectives, we can again achieve better running times in Section~\ref{sec:bmm15}.
In the end, we discuss a result by~\citet{OU07}, where the authors are concerned with finding supported spanning trees.

\subsubsection{Enumerating Extreme Points of MOCO with an Arbitrary Number of Objectives}\label{sec:bm15}

To find nondominated extreme points we can apply the algorithms from Section~\ref{sec:benson} to the convex relaxation of the MOCO we want to solve.
On one hand, convex relaxations of MOCO problems have an ideal point if and only if the MOCO instance is feasible.

On the other hand, because of the aforementioned problems, we cannot easily compute the convex relaxation of a MOCO problem to apply the Dual Benson algorithm.
But the only interaction between the algorithm and the MOLP at hand lies in solving weighted-sum linear programs.
Hence, instead of solving the weighted-sum objective over the feasible set of the convex relaxation, we can optimize the weighted-sum objective over the feasible set of the MOCO problem.

These considerations lead us to the following theorem.

\begin{theorem}[Enumeration of Extreme Points of MOCO Problems I~\citep{BM15}]\label{thm:poly_total}
	For every MOCO problem $P$ with a fixed number of objectives, the set of extreme nondominated points of $P$ can be enumerated in output-polynomial time if we can solve the weighted-sum scalarization of $P$ in polynomial time.
\end{theorem}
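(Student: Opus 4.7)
The plan is to apply the dual Benson algorithm of Theorem~\ref{thm:molp} to the convex relaxation of $P$ (Definition~\ref{def:convex_relaxation}), while side-stepping the fact that this relaxation has no compact inequality representation. The key observation, already hinted at in the text preceding the theorem, is that the dual Benson algorithm is an \emph{oracle} algorithm: its only interaction with the underlying MOLP is through weighted-sum linear programs $\ws{Q}{\ell}$ (and their lexicographic refinements $\lws{Q}{\ell}$) for $\ell\geq 0$. Hence I do not need to materialize the relaxation; I only need to be able to answer these oracle queries in polynomial time.

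First, I would verify that the convex relaxation $Q$ of $P$ has an ideal point whenever $\cosolutions\neq\emptyset$: since $\cosolutions\subseteq\{0,1\}^n$ is finite, each single-objective LP $\min\{C_i x\mid x\in\conv\cosolutions\}$ attains its optimum at a vertex of $\conv\cosolutions$, which lies in $\cosolutions$, so the ideal point of $Q$ exists and equals the ideal point of the discrete problem. Next, I would show that the weighted-sum queries on $Q$ can be implemented by the weighted-sum scalarization of $P$: for any $\ell\geq 0$,
\begin{equation*}
\min\{\ell\trans Cx \mid x\in\conv\cosolutions\} \;=\; \min\{\ell\trans Cx \mid x\in\cosolutions\},
\end{equation*}
since a linear function over a polytope attains its minimum at a vertex, and the vertices of $\conv\cosolutions$ are elements of $\cosolutions$. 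Thus, by hypothesis, the oracle query runs in polynomial time, and a minimizer returned by the $P$-oracle is simultaneously a minimizer for $Q$.

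Second, I would lift this to the lexicographic oracle $\lws{Q}{\ell}$ that the improved algorithm of Section~\ref{sec:benson} requires for producing facet-supporting inequalities. Since $\cosolutions$ is finite, we can solve $\lws{P}{\ell}$ by $d$ calls to a weighted-sum scalarization: after the first call we have the optimal value $v_1$ of $\ell\trans Cx$; then among the minimizers we minimize $C_1 x$ by solving $\min\{(M\ell\trans C + C_1)x\mid x\in\cosolutions\}$ with $M$ chosen sufficiently large in terms of $\langle C\rangle$ and $n$ so that the $\ell\trans C$-term dominates, and so on. The encoding length of the modified weights stays polynomial in the input, so each call still runs in polynomial time by assumption. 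The same equivalence as above shows the result agrees with the lexicographic optimum over $\conv\cosolutions$.

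Finally, plugging these polynomial-time oracles into the dual Benson algorithm yields, by Theorem~\ref{thm:molp}, a running time of $\cO(|\YX|^{\lfloor d/2\rfloor}(\poly(n,\langle C\rangle) + |\YX|\log|\YX|))$ for fixed $d$, which is output-polynomial. The main obstacle I expect is exactly the lexicographic-oracle emulation step: one must be careful that the multiplier $M$ used to collapse the lexicographic program into a sequence of weighted-sum scalarizations produces rational weights of polynomial encoding length, so that the assumed polynomial-time weighted-sum solver remains applicable; this is routine but needs the bound $|C_{ij}|, |\ell_i|$ and the integrality of $\cosolutions\subseteq\{0,1\}^n$ to control the required gap. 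Everything else — correctness, termination, and the vertex-enumeration subproblem inside dual Benson — is inherited directly from the analysis in~\citep{BM15}.
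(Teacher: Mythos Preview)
Your core approach matches the paper's: apply the dual Benson algorithm to the convex relaxation, note that the ideal point exists because $\cosolutions$ is finite, and replace the LP oracle by the MOCO weighted-sum oracle using $\min\{\ell\trans Cx\mid x\in\conv\cosolutions\}=\min\{\ell\trans Cx\mid x\in\cosolutions\}$. That is exactly the argument the paper sketches in the paragraphs preceding the theorem.

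Where you diverge is in your second step: you manufacture a lexicographic oracle from the weighted-sum oracle via a big-$M$ aggregation, so that you can invoke the \emph{improved} algorithm and the explicit bound of Theorem~\ref{thm:molp}. The paper does not do this. It keeps the two hypotheses separate: with only a weighted-sum oracle it appeals to the \emph{basic} dual Benson variant, which \citep{BM15} already shows to be output-polynomial (the separating inequalities need not be facet-supporting, so one does not get the $|\YX|^{\lfloor d/2\rfloor}$ bound, but one still gets $\totalp$); the lexicographic oracle is reserved for Theorem~\ref{inc_poly}. Your detour is not needed for the statement as written, but it is not wrong either: since $\cosolutions\subseteq\{0,1\}^n$, the minimum nonzero gap of $\ell\trans Cx$ is at least the reciprocal of a common denominator of the entries of $\ell\trans C$, so a polynomially encoded $M$ suffices, and the new weight vector $M\ell+e_i$ stays nonnegative and of polynomial size through all $d$ rounds. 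If pushed through, your argument actually proves more than Theorem~\ref{thm:poly_total}---it shows that the weighted-sum hypothesis already yields the sharper bound and essentially subsumes Theorem~\ref{inc_poly}. That is a genuine strengthening worth noting, but for the theorem at hand the paper's route is shorter: it simply cites the output-polynomial guarantee of the unimproved algorithm and avoids the big-$M$ bookkeeping altogether.
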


Moreover, if we are able to solve lexicographic objective functions over the feasible set of the MOCO problem, we can even bound the delay of the algorithm.

\begin{theorem}[Enumeration of Extreme Points of MOCO Problems II~\citep{BM15}]\label{inc_poly}
	For every MOCO problem $P$ with a fixed number of objectives, the set of extreme nondominated points of $P$ can be enumerated in incremental polynomial time if we can solve the lexicographic version of $P$ in polynomial time.
\end{theorem}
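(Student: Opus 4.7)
The plan is to apply the facet-supporting variant of the Dual Benson algorithm of Section~\ref{sec:benson} to the convex relaxation of $P$, replacing its LP oracles by the corresponding MOCO scalarization oracles, and then to argue that the delay between consecutive outputs can be bounded in terms of the input size and the number of extreme nondominated points discovered so far.

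The first step is to check that Theorem~\ref{thm:molp} is applicable to the convex relaxation of a feasible MOCO instance. Since $\cosolutions \subseteq \{0,1\}^n$ is a finite nonempty set, each single-objective problem $\min\{C_i x \mid x \in \conv \cosolutions\}$ attains its optimum at a vertex of $\conv \cosolutions$, which is an element of $\cosolutions$, so the value is finite. Hence the convex relaxation has an ideal point and Theorem~\ref{thm:molp} applies.

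The second step is to show that the Dual Benson algorithm can be driven without an explicit inequality description of $\conv \cosolutions$. As recalled in Section~\ref{sec:benson}, the algorithm interacts with the underlying MOLP only through the weighted-sum subroutine $\ws{P}{\ell}$ and, in the facet-supporting variant, the lexicographic refinement $\lws{P}{\ell}$. Because both objectives are linear, respectively lexicographically linear, in $x$, their optima over $\conv \cosolutions$ are attained at vertices of $\conv \cosolutions$ and therefore coincide with the corresponding optima over $\cosolutions$ itself. Consequently, each LP oracle call can be substituted by a call to the weighted-sum, respectively lexicographic weighted-sum, scalarization of the MOCO problem $P$. Under the hypothesis of the theorem, the lexicographic scalarization is solvable in polynomial time, which is precisely what the facet-supporting variant of Bökler and Mutzel's algorithm requires in order to guarantee that every non-trivial iteration produces a new facet of the lower image $\mathcal{D}$.

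The third and main step is to refine the global running time of Theorem~\ref{thm:molp} into an incremental-delay bound. By geometric duality, every facet of $\mathcal{D}$ corresponds to an extreme nondominated point of $\mathcal{P}$, that is, to an element of $\YX$, so the facet-supporting variant outputs a new element of $\YX$ each time an iteration discovers a separating hyperplane. After $k$ such outputs, the intermediate approximation polyhedron $P_i$ is described by the initial bounding inequalities of $P_0$ together with at most $k$ additional facet-supporting inequalities, and hence its total combinatorial complexity is bounded polynomially in $k$ and the input encoding. Each iteration between the $k$-th and $(k+1)$-th output thus consists of extracting one vertex of a polyhedron of complexity $\poly(k,|x|)$, performing one call to the polynomial-time lexicographic MOCO oracle, and updating the candidate list, all of which can be done in time $\poly(|x|,k)$; moreover the number of such intermediate iterations is itself bounded by $\poly(k,|x|)$ because each one either produces a new extreme point of $\mathcal{D}$ (of which there are at most $\poly(k)$ given the current complexity of $P_i$) or triggers a new output. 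This yields the desired incremental polynomial delay.

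The main obstacle is the third paragraph: carrying out the bookkeeping that converts the global $\poly(|x|)\cdot |\YX|^{\lfloor d/2\rfloor}$ bound of Theorem~\ref{thm:molp} into a per-delay bound that depends only on $|x|$ and $k$, and in particular arguing that the vertex enumeration subproblem on the intermediate polyhedra remains tractable in the incremental regime. The rest of the argument is essentially a wrapping of the MOLP result around the lexicographic MOCO oracle.
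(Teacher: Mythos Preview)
Your approach is exactly the one the paper takes: apply the facet-supporting Dual Benson algorithm to the convex relaxation, observe that the ideal point exists whenever the MOCO instance is feasible, and replace the LP/lex-LP oracles by weighted-sum and lexicographic weighted-sum scalarizations of $P$ over $\cosolutions$ itself. This is precisely the two paragraphs preceding Theorems~\ref{thm:poly_total} and~\ref{inc_poly} in Section~\ref{sec:bm15}.

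Note, however, that the paper does not actually carry out your third step; it simply states the incremental-delay claim as a result of~\citep{BM15} and leaves the bookkeeping there. Your sketch of step~3 is along the right lines---after $k$ facet cuts the outer approximation $P_i$ has $O(k)$ inequalities in a fixed-dimensional space, hence $\poly(k)$ vertices by the Upper Bound Theorem, and each vertex check costs one lexicographic oracle call---but you correctly identify this as the place where the real work lies, and the paper itself offers nothing beyond the citation on that point. So your proposal matches the paper's argument in scope and in fact goes slightly further in attempting to justify the delay bound.
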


One example of a problem where it is hard to compute the whole Pareto-front, but enumerating the nondominated extreme points can be done in incremental polynomial time is thus the MOSP problem although both the size of the Pareto-front and the number of nondominated extreme points can be superpolynomial in the input size, cf. \citep{G80, C83}.

There are two further methods which are concerned with finding extreme nondominated points of the Pareto-front of general multiobjective integer problems: The methods by \cite{Prz+2010} and \cite{OK10}.
Both do not give any running time guarantees, but the investigation of these can be subject to further research.

\subsubsection{Enumerating Extreme Points of Bicriterial Combinatorial Optimization Problems}\label{sec:bmm15}

In the following, we investigate different variants of the DA, i.e., with and without access to an algorithm to solve the lexicographic version of a biobjective combinatorial optimization (BOCO) problem $P$. Especially, we show that the nondominated extreme points of $P$ can be enumerated with polynomial delay if we have access to an algorithm to solve the lexicographic version of $P$. 

\subsubsection{The Dichotomic Approach}

Let $P$ be a BOCO problem. We begin by describing the DA without access to an algorithm to solve the lexicographic version of $P$. Notice that~\citet{Ane+1979} assume access to such an algorithm. 

In a first step, the DA computes optimal solutions $y^0$ and $y^1$ to the weighted-sum scalarizations $\ws{P}{(1,0)}$ and $\ws{P}{(0,1)}$, respectively. If $y^0$ equals $y^1$, the algorithm terminates since the ideal point is a member of the Pareto-front. Otherwise, the pair $(y^0,y^1)$ is added to the queue $\cQ$.

In each iteration, the algorithm retrieves a pair $(y^2,y^3)$ from $\cQ$ and $\lambda$ is computed such that
\begin{equation}\label{eq:lambda}
\lambda\trans y^2 = \lambda\trans y^3\,.
\end{equation}

Hence, the algorithm computes $\lambda = (|y^2_2 - y^3_2|, |y^2_1 - y^3_1|)$. The algorithm proceeds by computing an optimal solution $\bar{y}$ to the weighted-sum scalarization $\ws{P}{\lambda}$. If $\bar{y}$ is located in the line segment spanned by $y^2$ and $y^3$, i.e., $\lambda\trans y^2 =  \lambda\trans \bar{y} = \lambda\trans y^3$, the algorithm neglects $\bar{y}$ as it is either not a nondominated extreme point or was already discovered by the algorithm in a previous iteration, see Figure~\ref{fig:dac1}. Otherwise, $\bar{y}$ is added to the set $\cA$ and the pairs $(\bar{y},y^2)$ and $(\bar{y},y^3)$ are added to $\cQ$, see Figure~\ref{fig:dac2}. The algorithm terminates when $\cQ$ is empty. Moreover, the nondominated non-extreme points in $\cA$ are removed and the remaining points are output. We get the following result.

\begin{figure}
\begin{center}
\subfigure[Case 1]{
\includegraphics[scale=0.7]{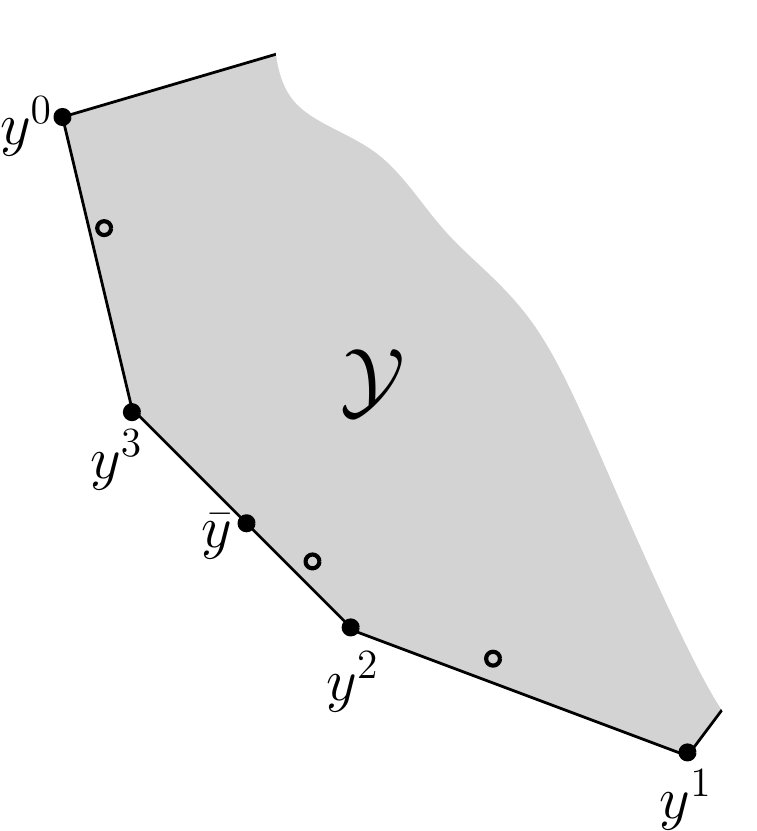}
\label{fig:dac1}
}
\subfigure[Case 2]{
\includegraphics[scale=0.7]{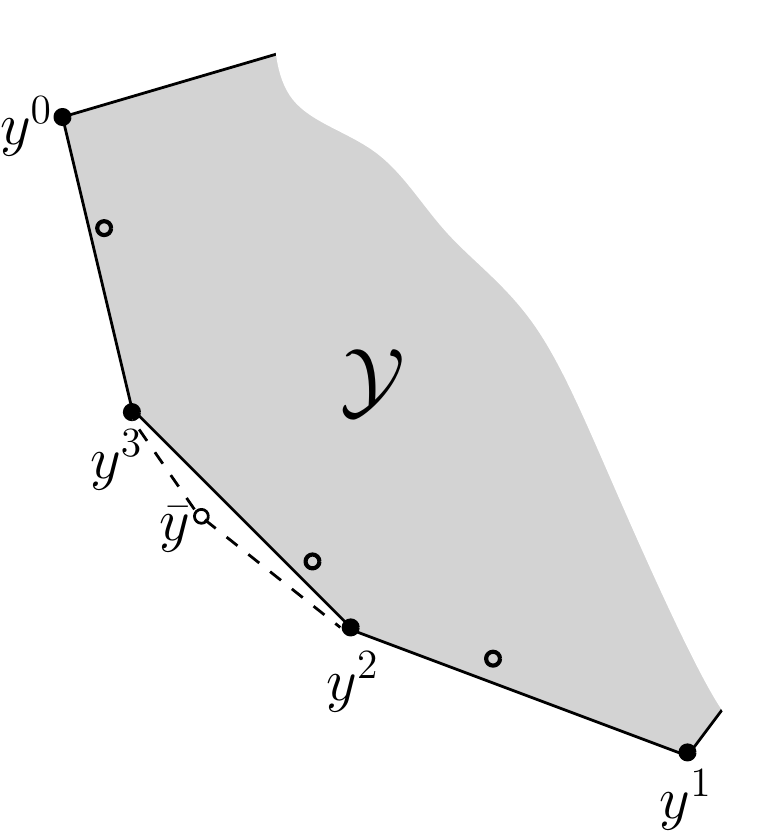}
\label{fig:dac2}
}
\end{center}
\caption{Illustration of the idea of the DA}
\end{figure}

\begin{theorem}\label{t:da}
Let $P$ be a BOCO problem, then the DA without access to an algorithm to solve the lexicographic version of $P$ outputs the nondominated extreme points of $P$. Moreover, if the weighted-sum scalarization of $P$ is solvable in polynomial time, the algorithm is output-sensitive.
\end{theorem}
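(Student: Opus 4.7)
The plan is to establish two things in sequence: (i) correctness, i.e., the algorithm outputs exactly $\YX$; and (ii) output-sensitivity, i.e., the total runtime is bounded by a polynomial in the input and output size whenever the weighted-sum scalarization is polynomial-time solvable.

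For correctness I would start from the standard fact that every minimizer of $\ws{P}{\lambda}$ with $\lambda > \mathbf{0}$ lies on the lower-left boundary of the upper image $\conv c(\cosolutions) + \RR^2_\geq$, so the final cleanup step that removes nondominated non-extreme points from $\cA$ leaves only members of $\YX$. I would then show by induction on the pairs processed that every pair $(y^2, y^3)$ pulled from $\cQ$ consists of two distinct points in $\YX$ with $y^2_1 < y^3_1$ and $y^2_2 > y^3_2$, and that the extreme nondominated points located strictly between them on the boundary of the upper image still remain to be discovered. The key geometric observation is that the weight $\lambda = (y^2_2 - y^3_2,\, y^3_1 - y^2_1)$ is strictly positive and perpendicular to $y^3 - y^2$, so $\lambda\trans y^2 = \lambda\trans y^3$; hence any optimum $\bar y$ of $\ws{P}{\lambda}$ either lies on the chord $[y^2, y^3]$ — in which case $y^2$ and $y^3$ are consecutive on the boundary and the pair may be discarded — or is strictly southwest of the chord, in which case $\bar y \in \YX$ is a genuinely new extreme point and branching on $(y^2, \bar y)$ and $(\bar y, y^3)$ is justified. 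This exhausts the lower-left boundary between $y^0$ and $y^1$, so every element of $\YX$ is eventually found.

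For output-sensitivity, set $k := |\YX|$ and perform an accounting on the queue $\cQ$. Beyond the two initial scalarizations, each iteration pops exactly one pair and either discovers a new extreme point (inserting two pairs) or neglects (inserting none). There are at most $k-2$ discovering iterations in total, contributing $2(k-2)$ insertions. Because $\cQ$ starts with one pair and must be emptied, the number of neglecting iterations is exactly $1 + 2(k-2) - (k-2) = k-1$. Hence the total number of weighted-sum calls is $2 + (k-2) + (k-1) = 2k - 1$, matching the classical bound of~\citet{Ane+1979}. Since each iteration does only $O(1)$ arithmetic outside the oracle call and the final cleanup of $\cA$ can be accomplished by sorting in $O(k \log k)$, the overall running time is $O(k(T + \log k))$, where $T$ is the time for one weighted-sum call; this is polynomial in the input and output size whenever $T$ is polynomial in the input size.

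The main obstacle lies in the geometric reasoning underpinning the inductive step: one must justify both that a point $\bar y$ returned by $\ws{P}{\lambda}$ and lying strictly below the chord is itself an extreme point of the upper image, and that every element of $\YX$ strictly between $y^2$ and $y^3$ on the boundary is eventually produced this way. Both rely on the standard characterization of extreme points of $\conv c(\cosolutions) + \RR^2_\geq$ via uniquely supporting hyperplanes. A related subtlety — and the reason the filtering step at the end is needed — is the handling of oracle ties: without a lexicographic tiebreak as in~\citet{Ane+1979}, the initial $y^0$ and $y^1$ from $\ws{P}{(1,0)}$ and $\ws{P}{(0,1)}$ or an intermediate $\bar y$ may coincide with or be dominated by existing boundary points, and these corner cases must be absorbed into the correctness argument rather than allowed to disrupt the queue invariant.
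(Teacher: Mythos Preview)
Your overall strategy—correctness by exhausting the lower-left boundary between $y^0$ and $y^1$, then bounding oracle calls through a queue accounting—matches the paper's. The gap is in your queue invariant and the exact count it underwrites.

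You assert that whenever $\bar y$ lies strictly below the chord $[y^2,y^3]$ it must belong to $\YX$, and from this derive exactly $2k-1$ oracle calls. This is false without a lexicographic oracle. Take $\YX=\{y^0,A,B,y^1\}$ with $y^0=(0,100)$, $A=(1,5)$, $B=(5,1)$, $y^1=(100,0)$, and suppose $c(\cosolutions)$ also contains the non-extreme image point $M=(3,3)$ on the facet $[A,B]$. Processing the pair $(y^0,y^1)$ gives $\lambda$ proportional to $(1,1)$, and the oracle for $\ws{P}{\lambda}$ may legitimately return $M$, which lies strictly below the chord $[y^0,y^1]$ yet is not in $\YX$. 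Your dichotomy ``discover a new extreme point or neglect'' is therefore incomplete: there is a third case in which a nondominated non-extreme point is discovered and branched on, so the number of discovering iterations can exceed $k-2$ and the $2k-1$ bound—which is exactly Aneja and Nair's bound under a \emph{lexicographic} oracle—does not transfer to the setting of this theorem. The paper handles precisely this by a different device: it observes that at most one non-extreme nondominated point is discovered per facet of $\cY$, and since the number of facets is linearly bounded in $|\YX|$, the total number of scalarizations is still $O(|\YX|)$, just not exactly $2k-1$. You should replace your invariant and exact count by this facet-based bound. One further ingredient you omit is that the encoding lengths of all intermediate points remain polynomial in the input size (the paper invokes~\citep{BM15} for this); without it, polynomially many polynomial-time oracle calls do not yet yield an output-sensitive algorithm.
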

\begin{proof}
First, observe that there is no nondominated extreme point $y$ such that $y_1 < y_1^0$ and $y_2 < y^1_2$, where $y^0$ and $y^1$ are the points found in the first step of the algorithm. Secondly, observe that there is at most one nondominated non-extreme point found by the algorithm per facet of $\cY$. Hence, the algorithm is finite. Moreover, by~\citet{Ise1974} all points computed by the algorithm except possibly $y^0$ and $y^1$ are nondominated.

Now assume there is a nondominated extreme point $\bar{y}$ that is not found by the algorithm. By the first observation, $y^0 <_{\lex} \bar{y} <_{\lex} y^1$. Hence, there must be a pair of points $(y^2,y^3)$ such that $y^2 <_{\lex} \bar{y} <_{\lex} y^3$ and there is no other pair $(y^4, y^5)$ that is ``closer'' to $\bar{y}$, i.e., $y^2 <_{\lex}y^4 <_{\lex} \bar{y} <_{\lex} y^5 <_{\lex} y^3$. Since the algorithm investigates each such pair, the algorithm will eventually discover $\bar{y}$ and we arrive at a contradiction. Hence, the algorithm finds all nondominated extreme points of $P$.

Because of the second observation and the fact that the number of nondominated facets of $\cY$ is linearly bounded in the cardinality of $\YX$, the number of weighted-sum scalarizations the algorithm has to solve is linearly bounded in the cardinality of $\YX$. Moreover, the encoding lengths of the points discovered during the computation of the algorithm are polynomially bounded in the size of the input, cf.~\citep{BM15}. Hence, the algorithm is output-sensitive if the weighted-sum scalarization of $P$ is solvable in polynomial time. 
\end{proof}

The above algorithm does not lead directly to an algorithm with polynomial delay since there may exist several solution to the weighted-sum scalarization $\ws{P}{\lambda}$, which are not necessarily extreme. However, if we assume access to an algorithm to solve the lexicographic version of $P$, we get the following result. 

\begin{corollary}
Let $P$ be a BOCO problem, then the DA with access to an algorithm to solve the lexicographic version of $P$ enumerates the nondominated extreme points of $P$. Moreover, if the lexicographic version of $P$ is solvable in polynomial time, the points are enumerated with incremental delay.
\end{corollary}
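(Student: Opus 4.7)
The plan is to piggy-back on the proof of Theorem~\ref{t:da} and isolate the two things that genuinely change when we replace the weighted-sum oracle by a lexicographic one: the output is now guaranteed to consist only of extreme points, and the delay between consecutive outputs can be bounded.

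First, I would address correctness. In the weighted-sum variant of the DA, the oracle call on direction $\lambda$ may return any point of the minimum face $\{Cx \mid x \in \cosolutions,\ \lambda\trans Cx = \text{opt}\}$, and in particular a non-extreme supported nondominated point lying strictly between $y^2$ and $y^3$; that is why Theorem~\ref{t:da} needs the final cleanup step. With the lex oracle we instead solve $\lws{P}{\lambda}$, whose tiebreakers are $c_1 x$ and $c_2 x$. I would use the bicriterial structure here: a facet of the upper image of the convex relaxation is a line segment between two adjacent extreme points, and on such a segment the point minimizing $c_1 x$ must coincide with whichever of its two endpoints has the smaller first coordinate (any other convex combination strictly increases $c_1 x$, and any Pareto-optimal solution with equal $c_1 x$ value cannot have a smaller $c_2 x$ value). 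Hence the lex oracle either returns some $\bar y$ with $\lambda\trans \bar y < \lambda\trans y^2$, in which case $\bar y$ is a new extreme nondominated point and is output, or it returns $y^2$ (or $y^3$), in which case the pair is correctly neglected. No non-extreme point is ever added to $\cA$.

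Second, completeness follows from the same contradiction argument as in Theorem~\ref{t:da}: if some extreme point $\bar y$ were missed, there would be an $<_{\lex}$-innermost pair $(y^2, y^3)$ in the queue with $y^2 <_{\lex} \bar y <_{\lex} y^3$, and processing this pair would have produced an extreme point strictly $<_{\lex} y^3$ and $>_{\lex} y^2$, contradicting innermostness.

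Third, I would prove incremental delay by a queue-accounting argument. Each iteration costs one lex oracle call plus $O(1)$ queue operations, which is $\poly(|x|)$ by assumption. Classify iterations as \emph{productive} (one new extreme point output, net $+1$ on queue size since one pair is removed and two added) or \emph{non-productive} (a pair is closed off, net $-1$). The queue starts with a single pair and is never empty while the algorithm runs, so after $i$ productive iterations the cumulative number of non-productive iterations is at most $i-1$. Therefore the total running time by the moment the $i$-th extreme point is output is bounded by $(2i-1)\cdot\poly(|x|) = \poly(|x|,i) = \poly(|x|,|C^i(x)|)$, which is exactly the incremental delay bound of Definition~\ref{d:pdt}.

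The main obstacle I expect is the first step: making precise that the lex oracle, which optimizes over the discrete set $\cosolutions$ rather than over $\conv \cosolutions$, nonetheless returns an integer solution whose image is an extreme point of the upper image of the convex relaxation. The argument really uses that we are in the biobjective case, where faces of the upper image are one-dimensional; in higher dimensions the same tiebreaker scheme would require a full lexicographic tower of all $d$ objectives and a more delicate argument, which is why the statement is confined to BOCO.
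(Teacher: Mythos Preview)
Your proposal is correct and follows the same approach as the paper's proof, only with substantially more detail. The paper's argument is a two-sentence sketch: it defers correctness to Theorem~\ref{t:da}, asserts without justification that the lexicographic oracle always returns an extreme point, and then states that the time to find the next extreme point (or terminate) is linearly bounded by the number already found. You supply precisely the two arguments the paper omits---the facet-endpoint reasoning for why $\lws{P}{\lambda}$ lands on a vertex in the biobjective case, and the queue-accounting that turns ``linearly bounded'' into an actual incremental-delay bound. Your concern in the last paragraph (that the lex oracle optimizes over $\cosolutions$, not $\conv\cosolutions$) is exactly the right thing to flag, and your resolution via the one-dimensionality of facets in $\bbR^2$ is sound.
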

\begin{proof}
The correctness follows from the proof of Theorem~\ref{t:da}. Since we have access to an algorithm to solve the lexicographic version of $P$, each point found by the algorithm is nondominated extreme. Hence, the running time to discover a new nondominated extreme point or to conclude that no such point exists is linearly bounded in the number of the nondominated extreme points found so far. Therefore, the algorithm enumerates the nondominated extreme points with incremental delay if the lexicographic version of $P$ is solvable in polynomial time.
\end{proof}

Note that a lexicographic version of many BOCO problems can be solved as fast as the single objective problem.
For example the biobjective shortest path, matroid optimization and assignment problems.
Also if a compact formulation is available, the lexicographic weighted sum problem can be solved as a lexicographic linear programming problem, which can also be solved as fast as a single objective linear programming problem (cf. \cite{BM15}).

\paragraph{A Variant of the Dichotomic Approach with Polynomial Delay}

We present a simple modification of the DA with access to an algorithm to solve the lexicographic version of $P$. The idea is the following: When a new nondominated extreme point is found, the DA puts two pairs of nondominated extreme points into the queue $\cQ$. At some point, the orginal algorithm checks if there is any nondominated extreme point $\bar{y}$ ``between'' such a pair $(y^2,y^3)$ of points, i.e., $y^2 <_{\lex} \bar{y} <_{\lex} y^3$ or $y^3 <_{\lex} \bar{y} <_{\lex} y^2$. Now instead the modified algorithm computes $\lambda$ as in (\ref{eq:lambda}) and solves $\lws{P}{\lambda}$ resulting in a nondominated extreme point $y$. If $y$ equals $y^2$ or $y^3$, we have verified that there is no point between the two points. Otherwise we add the triple $(y,y^2,y^3)$ to the queue $\widetilde{\cQ}$. 
If such a triple is retrieved from $\widetilde{\cQ}$, we can output the nondominated extreme point $y$. The rest of the algorithm is analogous to original Dichotomic Approach. Therefore, in each iteration exactly one nondominated extreme point is output. Moreover, in each iteration at most two lexicographic weighted sum-scalarization problems have to be solved. Hence, we can easily derive the following result.

\begin{corollary}
Let $P$ be a BOCO problem, then the above described variant of the DA outputs the nondominated extreme points of $P$ with polynomial delay if the lexicographic version of $P$ is solvable in polynomial time.
\end{corollary}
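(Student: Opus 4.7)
The proof naturally splits into correctness and delay analysis, and leans heavily on the preceding corollary. First, I would observe that the modified algorithm explores exactly the same recursion tree of pairs $(y^2, y^3)$ as the DA with lex access from the preceding corollary: every call to $\lws{P}{\lambda}$ is made on the same pair and returns the same nondominated extreme point in both versions. The modification merely reshuffles when each discovered point is emitted, buffering it as a triple $(y, y^2, y^3)$ in $\widetilde{\cQ}$ and deferring both the emission and the insertion of the two child pairs $(y, y^2)$ and $(y, y^3)$ into $\cQ$. Consequently the set of points eventually output coincides with that produced by the preceding corollary, which by Theorem~\ref{t:da} is precisely $\YX$.

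For the delay bound, I would organize the main loop so that a single iteration performs (i) popping a triple $(y, y^2, y^3)$ from $\widetilde{\cQ}$ and emitting $y$, and then (ii) handling the two just-spawned pairs $(y, y^2)$ and $(y, y^3)$ by computing $\lambda$ as in (\ref{eq:lambda}), invoking $\lws{P}{\lambda}$, and either pushing a new triple to $\widetilde{\cQ}$ (if a new extreme point is discovered) or discarding the pair (if the oracle returns $y^2$ or $y^3$, which certifies that the segment is a facet of $\cY$ containing no further extreme point). Thus each delay consists of at most two lex oracle calls plus polynomial bookkeeping; by the hypothesis on the lex oracle and the fact from~\citep{BM15} that the encoding lengths of all intermediate points remain polynomially bounded in the input size, each delay lies in $\poly(\enc{P})$. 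Initialization performs two lex calls to obtain $y^0$ and $y^1$, emits $y^0$, processes the pair $(y^0, y^1)$ to prime $\widetilde{\cQ}$, and schedules $y^1$ to be emitted as the final output once $\widetilde{\cQ}$ empties.

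The step I expect to be the main obstacle is verifying the amortization invariant that between two consecutive emissions only the at most two pairs spawned by the most recent emission need to be processed. This follows by induction on the iteration count, using that every emission inserts exactly two pairs into $\cQ$ and every pair is consumed before the next emission. Once this invariant is in place, the polynomial bound on the delay is immediate, and what remains is a careful check that the initialization and termination phases also respect it so that the first and last delays meet the same bound.
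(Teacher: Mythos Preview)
Your proposal is correct and follows essentially the same reasoning as the paper, which does not even supply a separate proof: the justification is the two sentences preceding the corollary, namely that each iteration emits exactly one nondominated extreme point and solves at most two lexicographic weighted-sum problems. Your write-up elaborates these observations in more detail (invoking Theorem~\ref{t:da} for correctness and the encoding-length bound from~\citep{BM15} for the delay), but the underlying argument is the same; the ``amortization invariant'' you flag as the main obstacle is in fact immediate from the algorithm's structure, since each dequeued triple deterministically spawns exactly two pairs that are processed before the next dequeue.
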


We remark here that we obtain a polynomial delay algorithm by keeping back points which we would have presented to the user already in the original DA.
This is not what we usually want to do in practice.
To overcome this issue, one could investigate the amortized delay, i.e., the amortized time needed per output. Moreover, we get the following result which follows from Theorem~\ref{pspd}.

\begin{corollary}
Let $P$ be a BOCO problem. If there is a \new{compact formulation} of $P$, then the above described algorithm outputs the nondominated extreme points of $P$ with polynomial delay and polynomial space.
\end{corollary}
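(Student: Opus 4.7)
The plan is to reduce the problem to Theorem~\ref{pspd} applied to the compact formulation of $P$. Recall that a compact formulation of a BOCO problem $P = (\cosolutions, C)$ means a system of linear inequalities $Ax \geq b$ with $A \in \QQ^{m\times n'}$, $b \in \QQ^m$ of size polynomial in the encoding size of $P$, such that the convex hull $\conv \cosolutions$ (or more precisely, the upper image of the convex relaxation) is captured by $\{x : Ax \geq b\}$. Consequently, the convex relaxation of $P$ as in Definition~\ref{def:convex_relaxation} can be written as an explicit biobjective linear program
\begin{equation*}
 \min\ \{Cx \mid Ax \geq b\}
\end{equation*}
whose encoding length is polynomial in that of $P$.

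First I would verify that this BLP has the same nondominated extreme points as $P$: by definition, the nondominated extreme points $\YX$ of $P$ are exactly the extreme points of the upper image of the convex relaxation, and hence of the BLP above. Thus enumerating $\YX$ for $P$ is equivalent to enumerating the nondominated extreme points of the associated BLP.

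Then I would invoke Theorem~\ref{pspd} directly on this BLP. The theorem guarantees an algorithm that outputs the nondominated extreme points with polynomial delay and polynomial space measured in the BLP's encoding length. Since the compact formulation has size polynomial in $\enc{P}$, polynomial delay and space in $\enc{A,b,C}$ translate to polynomial delay and space in $\enc{P}$. In particular, the lexicographic weighted-sum problems $\lws{P}{\ell}$ and the dual LPs $D_2(y)$ arising inside the algorithm of Theorem~\ref{pspd} are polynomial-size lexicographic linear programs, each solvable in polynomial time and polynomial space.

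There is no significant obstacle here beyond carefully tracing how the compactness of the formulation propagates through the algorithm of Theorem~\ref{pspd}: the only place where input size enters is in solving a constant number of (lexicographic) LPs and in storing at most three nondominated extreme points per delay. Both bounds remain polynomial in $\enc{P}$ exactly because the compact formulation has polynomial size. This gives the claimed polynomial delay and polynomial space guarantee.
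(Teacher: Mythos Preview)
Your proposal is correct and matches the paper's approach: the paper simply states that the corollary follows from Theorem~\ref{pspd}, and you have spelled out exactly why---a compact formulation makes the convex relaxation a polynomial-size BLP, so Theorem~\ref{pspd} applies directly and its polynomial delay and space bounds transfer back to $P$. Your elaboration on why the nondominated extreme points of $P$ coincide with those of the BLP, and why the size bounds propagate, is precisely the content the paper leaves implicit.
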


\subsubsection{Enumerating Supported Efficient Spanning Trees} \label{sec:ou07}

The first paper discussing techniques from enumeration algorithmics in the context of multiobjective optimization is by~\citet{OU07}.
They propose a solution to the multiobjective spanning tree problem, which is defined as follows.
\begin{definition}[Multiobjective Spanning Tree Problem (MOST)]
	Given an undirected Graph $G=(V,E)$, where $E\subseteq \{ e \subset V \mid |e| = 2\}$, and a multiobjective edge-cost function $c:E \rightarrow \QQ^d_\geq$.
	A feasible solution is a \new{spanning tree} $T$ of $G$, i.e., an acyclic, connected subgraph of $G$ containing all nodes $V$.
	We set $c(T) := \sum_{e\in T} c(e)$.
	The set of all such trees is denoted by $\mathcal{T}$.
	The goal is to enumerate the minima of $c(\mathcal{T})$ with respect to the canonical partial order on vectors.
\end{definition}
However, the model considered by~\citet{OU07} is different from the one considered in this paper.
They consider enumerating all \new{supported weakly efficient} spanning trees, i.e., all $T\in \mathcal{T}$, such that $T$ is an optimal solution to $\ws{\text{MOST}}{\ell}$ for some $\ell \geq 0$.
This includes non-Pareto-optimal spanning trees for $\ell\in \{e_i \mid i \in [d]\}$ and also equivalent trees $T$ and $T'$ with $T \neq T'$ and $c(T) = c(T')$.

For this problem, the authors propose an algorithm which runs with polynomial delay by employing a variant of the reverse search method by~\citet{AF92}.
The authors also generalize this algorithm to the case of enumerating Pareto-optimal basic feasible solutions of an MOLP if it is nondegenerate, i.e., every basic solution has exactly $n$ active inequalities.

\section{Conclusion}

In this paper, we showed that output-sensitive complexity is a useful tool to investigate the complexity of multiobjective combinatorial optimization problems.
We showed that in contrast to traditional computational complexity in multiobjective optimization, output-sensitive complexity is able to separate efficiently solvable from presumably not efficiently solvable problems.

On one hand, we provided examples of problems for which output-sensitive algorithms exist, e.g., the multiobjective global minimum-cut problem, and also computing the nondominated extreme points for many MOCO problems can be done efficiently.

On the other hand, we demonstrated that it is also possible to show that a MOCO problem does not admit an output-sensitive algorithm under weak complexity theoretic assumptions as $\cp \neq \cnp$.
One example is the multiobjective shortest path problem, which in turn also rules out the existence of output-sensitive algorithms for the multiobjective versions of the minimum perfect matching and the minimum-cost flow problem.

To conclude, we will raise some questions which can be investigated in further work.

\paragraph{Multiobjective Linear Programming}
While in Section~\ref{sec:bm15} it was shown that we can enumerate the extreme points of the upper image of an MOLP in incremental polynomial time, we needed the assumption that an ideal point exists.
We are looking forward to being able to lift this assumption to the general case of MOLP and obtain at least an output-sensitive algorithm.

Another question is, if it is necessary to fix the number of objectives $d$ in the running time to obtain an output-sensitive algorithm.
We conjecture that this is actually the case when using standard assumptions from complexity theory and will investigate this in the future.

A longer reaching question is, if we can find an algorithm with polynomial delay for the case of general MOLP and a fixed number of objectives.

\paragraph{Multiobjective Combinatorial Optimization}
Not much is known in this regard.
A very intriguing problem is the following:
\begin{definition}[Unconstrained Biobjective Combinatorial Optimization Problem]
	Given two vectors $c^1, c^2 \in \QQ^n$.
	The set of feasible solutions is $\{0,1\}^n$ and the objective function is
	\begin{align*}
	c: x \mapsto \begin{pmatrix} {c^1}^Tx \\ {c^2}^Tx \end{pmatrix}.
	\end{align*}
	The goal is to enumerate the minima of $c(\{0,1\}^n)$ with respect to the canonical partial order on vectors.
\end{definition}
Even for this problem, which is an unconstrained version of the biobjective knapsack problem, it is unkown if an output-sensitive algorithm exists.
Moreover, if there is no output-sensitive algorithm for this problem we can rule out output-sensitive algorithms for many MOCO problems, e.g., the multiobjective assignment and the multiobjective spanning tree problem.

\paragraph{Additional Complexity Classes}
There has been some research on new complexity classes, combining fixed parameter tractability theory and output-sensitive complexity theory, which is \emph{Parameterized Enumeration} by \citet{CMMSV13}.
It is very interesting to see if parameters can be found which make MOCO problems hard, apart from the fact that multiple objectives are present.

\printbibliography
\end{document}